\newtheorem{theorem}{Theorem}
\newtheorem{lemma}[theorem]{Lemma}
\theoremstyle{definition}
\newtheorem{remark}[theorem]{Remark}
\theoremstyle{remark}
\numberwithin{equation}{section}
\numberwithin{theorem}{section}
\newcounter{smalllist}
\newcommand{\MRhref}[2]{\href{http://www.ams.org/mathscinet-getitem?mr=#1}{#2}}
\renewcommand{\MR}[1]{\MRhref{#1}{MR #1}}
\newcommand{\ds}{\displaystyle}
\newcommand{\bi}{\bibitem}
\newcommand{\no}{\notag}
\newcommand{\lb}{\label}
\newcommand{\f}{\frac}
\newcommand{\bb}{\mathbb}
\newcommand{\cc}{\mathcal}
\newcommand{\bs}{\backslash}
\newcommand{\ti}{\tilde}
\newcommand{\pd}{\partial}
\newcommand{\si}{\sigma}
\newcommand{\la}{\lambda}
\newcommand{\al}{\alpha}
\newcommand{\be}{\beta}
\newcommand{\Ga}{\Gamma}
\newcommand{\ga}{\gamma}
\newcommand{\De}{\Delta}
\newcommand{\de}{\delta}
\newcommand{\eps}{\varepsilon}
\newcommand{\E}{\mathsf{E}}
\newcommand{\ca}{\text{\rm{cap}}}
\newcommand{\tr}{\text{\rm{tr}}}
\newcommand{\dist}{\text{\rm{dist}}}
\newcommand{\diam}{\text{\rm{diam}}}
\newcommand{\lip}{\text{\rm{Lip}}}
\newcommand{\ess}{{ess}}
\newcommand{\dd}{{d}}
\renewcommand{\Re}{\text{\rm Re}}
\renewcommand{\Im}{\text{\rm Im}}
\renewcommand{\oint}{\varointctrclockwise}
\newcommand{\bsn}{\raisebox{1pt}{$\smallsetminus$}}
\begin{document}
\title[Lieb--Thirring Inequalities for Jacobi Matrices]{Lieb--Thirring Inequalities for Finite and Infinite Gap Jacobi Matrices}
\author[J.~S.~Christiansen]{Jacob S.~Christiansen}
\address{
Centre for Mathematical Sciences\\
Lund University, Box 118\\
SE-22100, Lund, Sweden.}
\email{stordal@maths.lth.se}

\author[M.~Zinchenko]{Maxim Zinchenko}
\address{
Department of Mathematics and Statistics\\
University of New Mexico\\
Albuquerque, NM 87131.}
\email{maxim@math.unm.edu}

\thanks{JSC is supported in part by the Research Project Grant DFF--4181-00502 from the Danish Council for Independent Research.\\
MZ is supported in part by Simons Foundation Grant CGM-281971.}

\keywords{Jacobi matrices, Eigenvalues estimates, Cantor-type spectrum}
\subjclass{34L15, 47B36}
\date{\today}

\begin{abstract}
We establish Lieb--Thirring power bounds on discrete eigenvalues of Jacobi operators for Schatten class perturbations under very general assumptions.
Our results apply, in particular, to perturbations of reflectionless Jacobi operators with finite gap and Cantor-type essential spectrum.
\end{abstract}

\maketitle

\section{Introduction}

Let $A$ be a self-adjoint operator on some Hilbert space $\mathcal{H}$ and define
\begin{equation}
S^p(A)=\sum_{\la\in\si_\dd(A)} \dist\bigl(\la,\si_\ess(A)\bigr)^p, \quad p\geq 0,
\end{equation}
where $\si_\dd$ is the discrete and $\si_\ess$ the essential spectrum.
Each term in the sum is repeated according to the multiplicity of the eigenvalue $\la$.
Upper bounds on $S^p(A)$ for various choices of $A$ and values of $p$ have shown to be useful in studies of
quantum mechanics, differential equations, and dynamical systems.
The reader is referred to, e.g., \cite{HM07} for history and reviews. 

The original {\it Lieb--Thirring inequalities} deal with perturbations of the Laplacian on $L^2(\bb R^d)$ and assert that
\begin{equation}
\label{LT Laplace}
 S^p(-\De+V)\leq L_{p,d} \int_{\bb R^d} V_-(x)^{p+d/2}dx,
\end{equation}
where $V_-=\max\{0,-V\}$ and $L_{p,d}$ is a constant independent of $V$. This was proved by Lieb and Thirring in 1976 for $p>1/2$ if $d=1$ and for $p>0$ if $d\geq 2$.
Their motivation was a rigorous proof of the stability of matter, see \cite{LT75, LT76}. When $d=1$, the bound in \eqref{LT Laplace} fails to hold for $p<1/2$ and
the endpoint result for $p=1/2$ was proved by Weidl \cite{We96} some 20 years later. 


In this paper, we consider self-adjoint Jacobi operators on $\ell^2(\bb Z)$ represented by the tridiagonal Jacobi matrices
\begin{align}
J=\begin{pmatrix}
\ddots&\ddots&\ddots\\
& a_0 & b_1 & a_1 &     &\\
&     & a_1 & b_2 & a_2 &\\
&     &     & a_2 & b_3 & a_3\\
&&&&\ddots&\ddots&\ddots
\end{pmatrix}
\end{align}
with bounded parameters $a_n>0$ and $b_n\in\bb R$.
Our main goal is to obtain Lieb--Thirring inequalities for perturbations of almost periodic Jacobi matrices. In the general setting of almost periodic parameters,
the spectrum is typically a Cantor set.
We are motivated by the recent developments in spectral theory of Jacobi matrices, see \cite{JSC1,JSC2,DGL16,DGSV}, and in particular by the finite gap results of
Frank and Simon \cite{FS11} and also Hundertmark and Simon \cite{HS08}.

Before explaining our new results, let us briefly go through what is already known.
The spectral theory for perturbations of the {\it free} Jacobi matrix, $J_0$, (i.e., the case of $a_n\equiv 1$ and $b_n\equiv 0$)
is well understood and developed in much detail, see \cite{Si11}.
When $J=\{a_n, b_n\}_{n=1}^\infty$ is a compact perturbation of $J_0$, Hundertmark and Simon \cite{HS02} proved that
\begin{equation}
\label{LT Jacobi}
 S^p(J)\leq L_{p,\,J_0} \sum_{n=1}^\infty 4\vert a_n-1\vert^{p+1/2}+\vert b_n\vert^{p+1/2}, \quad p\geq 1/2,
\end{equation}
with some explicit constants $L_{p,\,J_0}$ that are independent of $J$. As in the continuous case, the inequality 
is false for $p<1/2$.
More recently, the $p=1/2$ case of \eqref{LT Jacobi} was extended
to finite gap Jacobi matrices in \cite{DKS10,FS11,HS08}.
In the setting of periodic and almost periodic parameters, the role of $J_0$ as a natural limiting point is taken over by the so-called isospectral torus,
denoted $\cc T_\E$. See, e.g., \cite{JSC2,CSZ1,SY97} for a deeper discussion of this object. 
The finite gap version of \eqref{LT Jacobi} with $p=1/2$ says that if $\E$ is a finite gap set (i.e., a finite union of disjoint, compact intervals) and $J$ is a trace class perturbation of an element $J'=\{a_n', b_n'\}_{n=-\infty}^\infty$ in $\cc T_\E$, then
\begin{equation}
\label{LT fg}
S^{1/2}(J) \leq L_{1/2,\,\E} \sum_{n=-\infty}^\infty \vert a_n-a_n'\vert+\vert b_n-b_n'\vert.
\end{equation}
As before, the constant $L_{1/2,\,\E}$ is independent of $J$, $J'$ and only depends on the underlying set $\E$.
In comparison with previous attempts, the novelty of \cite{FS11} lies in a clever reduction of the Lieb--Thirring bound for eigenvalues in a single gap to the
previously known case of no gaps. However, the method yields little information about the constants that come with each gap. As a result, this approach is hard
to generalize to sets with infinitely many gaps.

In the present paper, we improve and extend the eigenvalue bounds of \cite{HS08} to infinite gap Jacobi matrices and obtain Lieb--Thirring bounds for Schatten class perturbations (i.e., non trace class perturbations) of finite and infinite gap matrices.
Our new abstract results can be described in the following way.
Let $J'$ be a two-sided Jacobi matrix with $\si(J')=\si_\ess(J')$ and suppose $J=J'+\de J$ is a compact perturbation of $J'$. While compact perturbations do not
change the essential spectrum, they usually produce a number of discrete eigenvalues. By a general result of Kato \cite{Ka87} specialized to the present setting,
we have the following bound
\begin{align} \lb{Kato}
S^1(J) \leq \|\de J\|_1 \leq \sum_{n=-\infty}^\infty 4|\de a_n|+|\de b_n|,
\end{align}
where $\|\cdot\|_1$ denotes the trace norm. In contrast to the Lieb--Thirring bounds, the power on the eigenvalues in \eqref{Kato} is the same as on the perturbation.
Kato's inequality is optimal for perturbations with large sup norm. On the other hand, the Lieb--Thirring bound with $p=1/2$ is optimal for perturbations with small sup
norm (cf.~\cite{HS02}).
Our first main result (Theorem \ref{LTthm1}) in Section \ref{sec3} can be thought of as an interpolation between Kato's bound \eqref{Kato} and the Lieb--Thirring bound
\eqref{LT fg}. More precisely, we show that under certain assumptions on the unperturbed matrix $J'$, a Lieb--Thirring bound of the form
\begin{equation}
\lb{LT trace class}
S^p(J)\leq L_{p,\,J'}\sum_{n=-\infty}^\infty 4|\de a_n|+|\de b_n|, \quad 1/2<p<1,
\end{equation}
holds for any trace class perturbation $J$. The constant $L_{p,\,J'}$ is independent of $\de J$ and can be specified explicitly.
Our second main result (Theorem \ref{LTthm2}) is more general, but has slightly stronger assumptions on $J'$.
We show that
\begin{equation}
\label{LT schatten}
S^p(J)\leq  L_{p,\,J'}\sum_{n=-\infty}^\infty 4|\de a_n|^{p+1/2}+|\de b_n|^{p+1/2}, \quad p>1/2,
\end{equation}
whenever $\de J=J-J'$ belongs to the Schatten class $\mathcal{S}_{p+1/2}$. As before, the explicit constant $L_{p,\,J'}$ does not depend on $\de J$.
We mention in passing that for trace class perturbations and $1/2<p<1$, one has both \eqref{LT trace class} and \eqref{LT schatten} since $\cc S_1\subset\cc S_{p+1/2}$.
The latter bound is slightly better for small perturbations.

As for the classical Lieb--Thirring bounds, our proofs of \eqref{LT trace class} and \eqref{LT schatten} rely on a version of the Birman--Schwinger principle and a new estimate for
\begin{equation}
\label{trace}
\Vert D^{1/2}(J'-x)^{-1}D^{1/2} \Vert_1
\end{equation}
with $D\geq 0$ being a diagonal matrix. We establish the latter in Section \ref{sec2}.
Using the functional calculus, one can express the positive and negative parts of $(J'-x)^{-1}$ as Cauchy-type integrals.
This fact enables us (see Theorem \ref{TNEthm}) to give an upper bound on \eqref{trace} in terms of $\Vert D\Vert_1$ and a slight variation of the $m$-functions for the
spectral measures $d\rho_n$ of $(J', \de_n)$. To estimate further, we impose absolute continuity of $d\rho_n$ and the reflectionless condition (to be defined in Section
\ref{sec2}). If $\E$ is a homogeneous set in the sense of Carleson \cite{Carl}
(i.e., there is an $\eps>0$ so that $\vert(x-\de, x+\de)\cap{\E}\vert\geq\de\eps$ for all $x\in\E$ and all $\de<\diam(\E)$), then both conditions are fulfilled for every $J'$ in the isospectral torus $\cc T_\E$.
Theorem \ref{ReflEstThm} then gives an upper bound that only involves the ordinary $m$-function, but for all reflectionless measures on $\E$.
This result is the key to our Lieb--Thirring bounds. 

The second part of the paper focuses on explicit examples of infinite gap sets for which our results apply. This has so far been unexplored territory
although the issue is quite natural from an almost periodic point of view.
In Section \ref{sec4}, a detailed study of infinite band sets with one accumulation point is followed by a thorough investigation of fat Cantor sets.
For both types of structure, which are defined from a sequence $\{\eps_k\}_{k=1}^\infty$ with $0<\eps_k<1$, we obtain Lieb--Thirring bounds as
in \eqref{LT trace class}--\eqref{LT schatten} for perturbations of Jacobi matrices from the isospectral tori.
This is done under various assumptions on $\{\eps_k\}_{k=1}^\infty$ in Theorems \ref{InfBandLT} and \ref{CantorLT}.
A typical result in this direction is \eqref{LT trace class} for perturbations of $J'\in\cc T_\E$, where $\E$ is an infinite band set with parameters
$\{\eps_k\}_{k=1}^\infty$ satisfying $\sum_{k=1}^\infty \eps_k<\infty$. The summability condition in question is nearly optimal as it is, in fact,
a necessary condition for the Lieb--Thirring bound in the case $p=1/2$.

We also provide alternative versions of our bounds where the distance to the essential spectrum is measured by the potential theoretic Green function.
Since the infinite gap sets discussed in Section~\ref{sec4} are homogeneous, and hence, regular for potential theory, the Green function $g$ is the unique continuous
function which is positive and harmonic in $\bb C\bsn \E$, vanishes on $\E$, and for which $g(z)-\log|z|$ is harmonic at $\infty$.
Our alternative Lieb--Thirring bounds hold for $J=J'+\de J$ with $J'$ from the isospectral torus $\cc T_\E$ and take the form
\begin{equation}
\label{alt1}
\sum_{\la\in\si(J)\bs\E} g(\la)^{p} \leq  L_{p,\,\E}\sum_{n=-\infty}^\infty |\de a_n|^{(p+1)/2}+|\de b_n|^{(p+1)/2},
\quad p>1,
\end{equation}
where the constant $L_{p,\,\E}$ is independent of $J$, $J'$ and only depends on $p$ and the underlying set $\E$. In the case of an infinite band set,
a sufficient condition for \eqref{alt1} is $\sum_{k=1}^\infty \eps_k<\infty$. This, in turn, is shown to be a necessary condition for the alternative
bound \eqref{alt1} in the case $p=1$. For the middle $\mathbold{\eps}$-Cantor sets of Section \ref{sec4b}, a stronger condition seems to be needed and we show that \eqref{alt1} is satisfied provided $\eps_k\leq C/2^k$ for all large $k$.

\section{Trace Norm Estimates}
\label{sec2}

In this section, we obtain trace norm estimates which will play a crucial role in the proofs of our main results.
\begin{theorem} \lb{TNEthm}
Suppose $D\geq0$ is a diagonal matrix of trace class and $J'$ is a self-adjoint Jacobi matrix. Let $\E=\si(J')$, then
\begin{align} \lb{TNE0}
\|D^{1/2}(J'-x)^{-1}D^{1/2}\|_1 \leq \|D\|_1\,\sup_{n\in\bb Z} \int_\E \f{d\rho_n(t)}{|t-x|}, \quad x\in\bb R\bsn\E,
\end{align}
where $d\rho_n$ is the spectral measure of $(J',\de_n)$, that is, the measure from the Herglotz representation of the $n$th diagonal entry of $(J'-z)^{-1}$,
\begin{align}
\bigl\langle\de_n,(J'-z)^{-1}\de_n\bigr\rangle = \int_\E \f{d\rho_n(t)}{t-z}, \quad z\in\bb C\bsn\E.
\end{align}
\end{theorem}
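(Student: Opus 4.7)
The plan is to split the self-adjoint operator $(J'-x)^{-1}$ into its positive and negative parts via the functional calculus. This reduces the trace norm on the left-hand side to a sum of two traces of positive trace class operators, which can then be evaluated explicitly in the standard basis using the spectral measures $d\rho_n$.

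Concretely, since $x\in\bb R\bsn\E$, the function $t\mapsto 1/(t-x)$ is bounded and continuous on $\E=\si(J')$. Decomposing it into its positive and negative parts $f_\pm(t):=\max\{\pm 1/(t-x),0\}$, the functional calculus yields $(J'-x)^{-1}=P_+-P_-$ with $P_\pm:=f_\pm(J')\geq 0$ bounded. Because $D^{1/2}$ is Hilbert--Schmidt (as $D\geq 0$ is trace class), the sandwiched operators $D^{1/2}P_\pm D^{1/2}$ are positive and trace class, so the triangle inequality for $\|\cdot\|_1$ combined with $\|T\|_1=\tr(T)$ for $T\geq 0$ gives
\begin{align*}
\|D^{1/2}(J'-x)^{-1}D^{1/2}\|_1 \leq \tr\bigl(D^{1/2}P_+D^{1/2}\bigr)+\tr\bigl(D^{1/2}P_-D^{1/2}\bigr).
\end{align*}

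Each trace is then computed in the standard basis $\{\de_n\}_{n\in\bb Z}$. Since $D$ is diagonal with entries $d_n\geq 0$, one has $D^{1/2}\de_n=\sqrt{d_n}\,\de_n$, so cyclicity of the trace yields $\tr(D^{1/2}P_\pm D^{1/2})=\sum_{n\in\bb Z} d_n\langle\de_n,P_\pm\de_n\rangle$. The spectral theorem for $J'$ identifies $\langle\de_n,P_\pm\de_n\rangle=\int_\E f_\pm(t)\,d\rho_n(t)$, and summing with $f_+(t)+f_-(t)=1/|t-x|$ merges the two contributions into $\sum_n d_n\int_\E d\rho_n(t)/|t-x|$. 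Factoring out $\sup_n\int_\E d\rho_n(t)/|t-x|$ and using $\sum_n d_n=\|D\|_1$ completes the argument.

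I expect no serious obstacle here: the argument is essentially careful bookkeeping once the sign decomposition is in place. The minor points worth checking are that $D^{1/2}(J'-x)^{-1}D^{1/2}$ is trace class -- immediate from $D^{1/2}\in\cc S_2$ together with the boundedness of $(J'-x)^{-1}$ -- and the standard cyclicity of the trace for such products. The representation of $P_+$ and $P_-$ as Cauchy-type integrals referenced in the introduction is exactly the identity $\langle\de_n,P_\pm\de_n\rangle=\int_\E f_\pm\,d\rho_n$ used in step three.
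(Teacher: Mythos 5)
Your proof is correct and rests on the same positive/negative decomposition of $(J'-x)^{-1}$ that the paper uses: your $P_\pm = f_\pm(J')$ with $f_\pm(t)=\max\{\pm 1/(t-x),0\}$ are exactly the operators the paper calls $R_\pm = \pm P_{\E_\pm}(J')(J'-x)^{-1}P_{\E_\pm}(J')$, since on $\si(J')$ the function $\pm\chi_{\E_\pm}(t)/(t-x)$ coincides with $f_\pm(t)$. Where you diverge is in how the resulting traces are evaluated. The paper first writes each $R_\pm$ as a Cauchy-type contour integral $\frac{\pm 1}{2\pi i}\oint_{\Ga_\pm}\frac{1}{z-x}(z-J')^{-1}dz$, exchanges trace and contour integral, expands $\tr[D^{1/2}(z-J')^{-1}D^{1/2}]$ diagonally, and then deforms $\Ga_\pm$ onto $\E_\pm$ so that the boundary values of the resolvent produce $d\rho_n$ via the Stone formula. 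You bypass all of this by evaluating $\tr(D^{1/2}P_\pm D^{1/2})$ directly in the standard basis, using that $D^{1/2}$ is diagonal to reduce each term to $d_n\langle\de_n,P_\pm\de_n\rangle$, and then invoking the spectral theorem in the form $\langle\de_n,f(J')\de_n\rangle=\int_\E f\,d\rho_n$ for bounded Borel $f$. This is more elementary and shorter; the paper's contour argument buys nothing extra for this particular statement, though it does set up the resolvent/Herglotz machinery that recurs in later sections. One cosmetic nit: you attribute $\tr(D^{1/2}P_\pm D^{1/2})=\sum_n d_n\langle\de_n,P_\pm\de_n\rangle$ to cyclicity of the trace, but it follows even more simply by computing $\sum_n\langle\de_n,D^{1/2}P_\pm D^{1/2}\de_n\rangle$ and using $D^{1/2}\de_n=\sqrt{d_n}\,\de_n$; either way the step is sound.
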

\begin{proof}
Fix $x\in\bb R\bsn \E$ and let $\E_\pm=\E\cap(x,\pm\infty)$. In addition, let $R_\pm$ be the positive and negative parts of $(J'-x)^{-1}$ defined by
\begin{align} \lb{Rpm}
R_\pm = \pm P_{\E_\pm}(J') (J'-x)^{-1} P_{\E_\pm}(J'),
\end{align}
where $P_{\E_\pm}(J')$ are the spectral projections of $J'$ onto the sets $\E_\pm$. Then
\begin{equation}
(J'-x)^{-1}=R_+-R_-, \quad R_\pm\geq0,
\end{equation}
and hence, $D^{1/2}R_\pm D^{1/2}\geq0$. This yields the trace norm estimate,
\begin{align} \lb{TNE1}
\|D^{1/2}(J'-x)^{-1}D^{1/2}\|_1
&= \|D^{1/2}(R_+-R_-)D^{1/2}\|_1 \no
\\
&\leq \|D^{1/2}R_+D^{1/2}\|_1+\|D^{1/2}R_-D^{1/2}\|_1
\\
&= \tr\big[D^{1/2}R_+D^{1/2}\big] + \tr\big[D^{1/2}R_-D^{1/2}\big]. \no
\end{align}
Let $\Ga_\pm$ be non-intersecting rectangular contours around $\E_\pm$. Using the functional calculus we can express the RHS of \eqref{Rpm} as a Cauchy-type integral,
\begin{align}
R_\pm = \f{\pm1}{2\pi i} \oint_{\Ga_\pm} \f{1}{z-x}(z-J')^{-1}dz.
\end{align}
Multiplying by $D^{1/2}$ from the left and from the right and taking the trace then give
\begin{align}
\tr\big[D^{1/2}R_\pm D^{1/2}\big] &= \f{\pm1}{2\pi i} \oint_{\Ga_\pm} \f{1}{z-x}\tr\big[D^{1/2}(z-J')^{-1}D^{1/2}\big]dz
\\
&= \f{\pm1}{2\pi i} \sum_{n\in\bb Z} \langle\de_n,D\de_n\rangle \oint_{\Ga_\pm} \f{1}{z-x} \bigl\langle\de_n,(z-J')^{-1}\de_n\bigr\rangle dz. \no
\end{align}
Finally, deforming the contours $\Ga_\pm$ into $\E_\pm$ traversed twice in the opposite directions and noting that
\begin{align}
& \frac{1}{2\pi i}\Bigl(\bigl\langle\de_n,(t-i\eps-J')^{-1}\de_n\bigr\rangle-\bigl\langle\de_n,(t+i\eps-J')^{-1}\de_n\bigr\rangle \Bigr) \no
\\ &\quad = \frac{1}{\pi}\Im\bigl\langle\de_n,(J'-t-i\eps)^{-1}\de_n\bigr\rangle \xrightarrow{\;\;w\;\;} d\rho_n(t)
\; \mbox{ as } \; \eps\to 0^+,
\end{align}
we obtain
\begin{align} \lb{TNE2}
&\tr[D^{1/2}R_\pm D^{1/2}] =
\sum_{n\in\bb Z}\langle\de_n,D\de_n\rangle \int_{\E_\pm} \f{d\rho_n(t)}{|t-x|}.
\end{align}
Combining \eqref{TNE2} with \eqref{TNE1} yields \eqref{TNE0}.
\end{proof}

A natural question is how to estimate the integrals in \eqref{TNE0}, but first some notation.
Throughout the paper, $\E\subset \bb R$ will denote a compact set. We let $\be_0=\inf\E$ and $\al_0=\sup\E$. Since $[\be_0,\al_0]\bsn\E$ is an open set,
it can be written as a disjoint union of open intervals; hence,
\begin{align} \lb{SetE}
\E=\bigl[\be_0,\al_0\bigr]\bsn\bigcup_{j\geq1}\bigl(\al_j,\be_j\bigr).
\end{align}
For convenience, we define $(\al,\be)$ with $\be<\al$ by
\begin{equation}
(\al,\be)=(-\infty,\be)\cup(\al,\infty).
\end{equation}
With this convention, we shall refer to $(\al_j,\be_j)$, $j\geq0$, as the {\it gaps} of $\E$.
We also call $(\al_j, \be_j)$, $j\geq 1$, the inner gaps and $(\al_0,\be_0)$ the outer gap of $\E$.

For a probability measure $d\rho$ supported on $\E$, define the associated Herglotz function by
\begin{equation}
m(z)=\int_\E\f{d\rho(t)}{t-z}, \quad z\in\bb C\bsn\E.
\end{equation}
The measure $d\rho$ is called {\it reflectionless} (on $\E$) if
\begin{equation}
\Re[m(x+i0)]=0 \; \mbox{ for a.e.\ $x\in\E$}.
\end{equation}
When $\E$ is essentially closed (i.e., $|\E\cap(x-\eps,x+\eps)|>0$ for all $x\in\E$ and $\eps>0$), we will denote the set of all reflectionless probability measures supported on $\E$ by $\cc R_\E$.
Reflectionless measures appear prominently in spectral theory of finite and infinite gap Jacobi matrices (see, e.g., \cite{JSC2,Re11,Si11,SY97}).
In particular, the isospectral torus $\cc T_\E$ associated with 
$\E$ is the set of all Jacobi matrices $J'$ that are reflectionless on $\E$
(i.e., the spectral measure of $(J',\de_n)$ belongs to $\cc R_\E$ for every $n\in\bb Z$) and for which $\si(J')=\E$.
It is well known (see for example \cite{SY97}) that $d\rho$ is a reflectionless probability measure on $\E$ if and only if $m(z)$ is of the form
\begin{align}
\label{m fct}
m(z) = \f{-1}{\sqrt{(z-\be_0)(z-\al_0)}} \prod_{j\geq1}\f{z-\ga_j}{\sqrt{(z-\al_j)(z-\be_j)}},
\end{align}
for some $\ga_j\in[\al_j,\be_j]$, $j\geq1$.

For absolutely continuous reflectionless measures we have the following upper bound \eqref{ReflEst1} for the integrals that appear on the RHS of
our trace norm estimate \eqref{TNE0}. This result is the key to our Lieb--Thirring bounds for perturbations of reflectionless Jacobi matrices in Section \ref{sec4}.
\begin{theorem} \lb{ReflEstThm}
Let $\E\subset\bb R$ be an essentially closed compact set and suppose $d\rho$ is a reflectionless absolutely continuous probability measure on $\E$. Denote the gaps of $\E$ as in \eqref{SetE}. Then, for every $k\geq1$,
\begin{align} \lb{ReflEst1}
\int_\E \f{d\rho(t)}{|t-x|} \leq C_k \sup_{d\mu\in\cc R_\E}\left|\int_\E \f{d\mu(t)}{t-x}\right|, \quad x\in(\al_k,\be_k),
\end{align}
where
\begin{align}
C_k = 9+2\min\left\{\log\f{\be_k-\be_0}{\be_k-\al_k},\, \log\f{\al_0-\al_k}{\be_k-\al_k}\right\}.
\end{align}
Equivalently, if for fixed $x\in(\al_k,\be_k)$ we define $\ti\ga_j\in\{\al_j,\be_j\}$ such that
\begin{equation}
|x-\ti\ga_j|=\max\bigl\{|x-\al_j|,|x-\be_j|\bigr\}, \quad j\geq1,
\end{equation}
then
\begin{align} \lb{ReflEst2}
\int_\E \f{d\rho(t)}{|t-x|} \leq \f{C_k}{\sqrt{|x-\be_0||x-\al_0|}} \prod_{j\geq1}\f{|x-\ti\ga_j|}{\sqrt{|x-\al_j||x-\be_j|}}.
\end{align}
\end{theorem}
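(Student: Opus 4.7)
My plan has three steps.

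\emph{Equivalence of the two forms.} By \eqref{m fct}, every $d\mu\in\cc R_\E$ satisfies
\[
|m_\mu(x)|=\frac{1}{\sqrt{|x-\beta_0||x-\alpha_0|}}\prod_{j\geq 1}\frac{|x-\gamma_j|}{\sqrt{|x-\alpha_j||x-\beta_j|}}.
\]
Since $|x-\gamma_j|$ is maximized over $\gamma_j\in[\alpha_j,\beta_j]$ at the endpoint $\tilde\gamma_j$ farther from $x$, the supremum $M(x):=\sup_{d\mu\in\cc R_\E}|m_\mu(x)|$ equals exactly the product on the RHS of \eqref{ReflEst2}, so \eqref{ReflEst1} and \eqref{ReflEst2} are equivalent.

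\emph{Splitting the integral.} Define $A_+:=\int_{\E\cap(x,\infty)}d\rho(t)/(t-x)$ and $A_-:=\int_{\E\cap(-\infty,x)}d\rho(t)/(x-t)$. Then $A_+-A_-=m_\rho(x)$ and $A_++A_-=\int_\E d\rho(t)/|t-x|$, whence
\[
\int_\E\frac{d\rho(t)}{|t-x|}=|m_\rho(x)|+2\min\{A_+,A_-\}\leq M(x)+2\min\{A_+,A_-\}.
\]
It therefore suffices to prove the two one-sided bounds $A_+\leq(4+\log\frac{\alpha_0-\alpha_k}{\beta_k-\alpha_k})M(x)$ and $A_-\leq(4+\log\frac{\beta_k-\beta_0}{\beta_k-\alpha_k})M(x)$; taking the minimum and combining with the trivial bound for $|m_\rho|$ produces the claimed $C_k=9+2\min\log$.

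\emph{The one-sided estimate.} For $A_+$, insert $d\rho(t)=\tfrac{1}{\pi}|m_\rho(t+i0)|\,dt$ and the product formula for $|m_\rho(t+i0)|$ at $t\in\E$. Dividing by $M(x)$, the integrand factors as $\frac{1}{\pi(t-x)}\cdot\frac{\sqrt{(x-\beta_0)(\alpha_0-x)}}{\sqrt{(t-\beta_0)(\alpha_0-t)}}\cdot\prod_{j\geq1} R_j$, where each $R_j=\frac{|t-\gamma_j|\sqrt{|x-\alpha_j||x-\beta_j|}}{|x-\tilde\gamma_j|\sqrt{(t-\alpha_j)(t-\beta_j)}}$. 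A case analysis on the position of $(\alpha_j,\beta_j)$ relative to $x$ and $t$ shows $R_j\leq 1$ for gaps lying entirely to the left of $x$ (since then $\tilde\gamma_j$ is also the endpoint farthest from $t$); for the $j=k$ factor and for gaps $j$ squeezed between $\beta_k$ and $t$ or lying to the right of $t$, $R_j$ may exceed $1$ but only with an integrable square-root singularity at the corresponding gap edge. Collecting these contributions, $A_+/M(x)$ is dominated by a uniformly bounded constant times the elementary integral
\[
\int_{\beta_k}^{\alpha_0}\frac{\sqrt{(x-\beta_0)(\alpha_0-x)}\,dt}{(t-x)\sqrt{(t-\beta_0)(\alpha_0-t)}},
\]
which via the substitution $t=\frac{\alpha_0+\beta_0}{2}+\frac{\alpha_0-\beta_0}{2}\cos\theta$ evaluates to $\log\frac{\alpha_0-\alpha_k}{\beta_k-\alpha_k}$ plus an absolute constant. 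The estimate on $A_-$ follows by left–right symmetry.

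The main obstacle is the case analysis in step 3, particularly for gaps $(\alpha_j,\beta_j)$ strictly between $\beta_k$ and $t$: there $\tilde\gamma_j=\beta_j$ sits on the side of gap $j$ closer to $t$, so the naive bound $|t-\gamma_j|\leq|t-\tilde\gamma_j|$ fails and $R_j$ genuinely blows up like $(t-\beta_j)^{-1/2}$. Extracting the right cancellation from the denominator $\sqrt{(t-\alpha_j)(t-\beta_j)}$, summing these contributions uniformly across all such gaps, and tracking the additive constants through the final trigonometric evaluation is the main technical content.
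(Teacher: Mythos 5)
Your steps 1 and 2 are fine and coincide with what the paper does implicitly: the supremum over $\cc R_\E$ is attained by taking $\ga_j=\ti\ga_j$ for all $j$ (every choice of $\ga_j\in[\al_j,\be_j]$ is realized by a reflectionless measure), and the identity $\int_\E\f{d\rho(t)}{|t-x|}=|m_\rho(x)|+2\min\{A_+,A_-\}$ is exactly the paper's last display. The problem is step 3, which is the entire content of the theorem, and which you have not actually proved. Your plan is a pointwise comparison of the density $w(t)=\f1\pi|m_\rho(t+i0)|$ on $\E_+$ with the gapless density on $[\be_0,\al_0]$ via the factors $R_j$, claiming the product is ``a uniformly bounded constant.'' That cannot work as stated: for every gap $(\al_j,\be_j)$ contained in the convex hull of $\E_+$ with $\ga_j\neq\be_j$, the factor $R_j$ blows up like $(t-\be_j)^{-1/2}$ at the gap edge, there are in general infinitely many such gaps with no summability or size decay assumed on them (the theorem only assumes $\E$ essentially closed and compact), and you yourself defer ``extracting the right cancellation'' and ``summing these contributions uniformly across all such gaps'' to future work. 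In addition, your evaluation of the comparison integral is wrong: $\int_{\be_k}^{\al_0}\f{\sqrt{(x-\be_0)(\al_0-x)}\,dt}{(t-x)\sqrt{(t-\be_0)(\al_0-t)}}$ is not $\log\f{\al_0-\al_k}{\be_k-\al_k}+O(1)$ uniformly in $x$; it grows like $\log\f{1}{\be_k-x}$ as $x\uparrow\be_k$, the compensating factor $\sqrt{\be_k-x}$ being hidden in the $j=k$ factor you discarded into the ``constant,'' so the bookkeeping is inconsistent. Finally, you attach the logarithms to the wrong sides: in the correct argument the bound for $A_+$ (integral over $\E_+$) carries $\log\f{\be_k-\be_0}{\be_k-\al_k}$, because the troublesome mass on $\E_+$ is controlled by mass on the band region $[\be_0,\al_k]$ to the \emph{left} of the gap; whether your swapped one-sided inequalities are even true is not established by anything you wrote (the final $C_k$ would survive a swap, but only if both one-sided bounds are actually proved).

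For comparison, the paper's proof avoids pointwise domination of $\prod_j R_j$ altogether. It splits $|t-\ga_k|\leq|t-x|+|x-\ti\ga_k|$, bounds the left-gap product by $p_-(t)\leq\ti p_-(x)$ for $t\geq x$, and then recognizes the two resulting integrands as the AC parts of reflectionless probability measures: one supported on $\E_+$ (whose Cauchy transform at $x<\inf\E_+$ is at most $p_+(x)/\sqrt{|x-\be_k||x-\al_0|}$), and one supported on $[\be_0,\al_k]\cup\E_+$ with the zero placed at $x$ (whose Cauchy transform at $x$ is $\leq0$). The second device converts the total mass on $\E_+$ into an integral over $[\be_0,\al_k]$, and only that single elementary integral $\int_{\be_0}^{\al_k}\f{dt}{\sqrt{|t-\be_0||t-\al_k||t-\be_k|}}$ needs to be computed, producing the $\log\f{\be_k-\be_0}{\be_k-\al_k}$ term. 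Some mechanism of this kind, exploiting the reflectionless structure globally rather than factor by factor, is exactly what is missing from your proposal.
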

\begin{proof}
Fix $k\geq1$ and take a point $x\in(\al_k,\be_k)$. Define $\E_\pm=\E\cap(x,\pm\infty)$. 
Since $d\rho$ is absolutely continuous, we have
\begin{equation}
d\rho(t)=\f1\pi \Im[m(t+i0)]dt
\end{equation}
with $m(z)$ as in \eqref{m fct}.
By the reflectionless assumption, $\Im[m(t+i0)]=|m(t+i0)|$  a.e.\;on $\E$,
and hence,
\begin{equation}
d\rho(t)=\f1\pi|m(t+i0)|\chi_\E(t)dt.
\end{equation}
Let $w(t)=|m(t+i0)|$ for a.e.\ $t\in\bb R$, then
\begin{align} \lb{w-prod}
w(t) = \f{1}{\sqrt{|t-\be_0||t-\al_0|}} \prod_{j\geq1}\f{|t-\ga_j|}{\sqrt{|t-\al_j||t-\be_j|}}, \quad t\in\bb R\bs\E.
\end{align}
Define also
\begin{align} \lb{p-prod}
\begin{split}
p_\pm(t) &= \prod_{\substack{j\geq1\\\al_j\gtrless\al_k}} \f{|t-\ga_j|}{\sqrt{|t-\al_j||t-\be_j|}} \quad t\in\bb R\bs\E_\pm,
\\
p_\pm(t) &= \lim_{\eps\downarrow0}\prod_{\substack{j\geq1\\\al_j\gtrless\al_k}} \f{|t+i\eps-\ga_j|}{\sqrt{|t+i\eps-\al_j||t+i\eps-\be_j|}}, \;\text{ a.e. } t\in\E_\pm.
\end{split}
\end{align}
Existence of the limit in \eqref{p-prod} follows from that for $w(t)$ and we have
\begin{align} \lb{wp-prod}
w(t)=\f{p_-(t)}{\sqrt{|t-\be_0||t-\al_k|}} |t-\ga_k| \f{p_+(t)}{\sqrt{|t-\be_k||t-\al_0|}}, \;\text{ a.e. } t\in\bb R.
\end{align}
Define $\ti w(t)$ and $\ti p_{\pm}(t)$ as above, but 
with $\{\ga_j\}_{j\geq1}$ replaced by $\{\ti\ga_j\}_{j\geq1}$. Then
\begin{align}
p_\pm(t)\leq\ti p_\pm(t)\leq\ti p_\pm(x), \quad t\in[x,\mp\infty).
\end{align}
Since 
$|t-\ga_k|\leq|t-x|+|x-\ti\ga_k|$, we have
\begin{align}
\int_{\E_+} \f{d\rho(t)}{t-x}
&\leq
\f1\pi \int_{\E_+} \f{p_-(t)|x-\ti\ga_k|p_+(t)}{\sqrt{|t-\be_0||t-\al_k|} \sqrt{|t-\be_k||t-\al_0|}} \f{dt}{t-x} \no
\\
&+
\f1\pi \int_{\E_+} \f{p_-(t)|t-x|p_+(t)}{\sqrt{|t-\be_0||t-\al_k|} \sqrt{|t-\be_k||t-\al_0|}} \f{dt}{t-x} \no
\\
&\leq
\f{\ti p_-(x)|x-\ti\ga_k|}{\pi\sqrt{|x-\be_0||x-\al_k|}}
\int_{\E_+} \f{p_+(t)}{\sqrt{|t-\be_k||t-\al_0|}} \f{dt}{t-x} \no
\\
&+
\f{\ti p_-(x)}{\pi} \int_{\E_+} \f{|t-x|p_+(t)}{\sqrt{|t-\be_0||t-\al_k||t-\be_k||t-\al_0|}} \f{dt}{t-x}. \lb{Refl1}
\end{align}
The fact that
\begin{equation}
\f{p_+(t)\chi_{\E_+}(t)dt}{\pi\sqrt{|t-\be_k||t-\al_0|}}
\end{equation}
is the AC part of a reflectionless probability measures on $\E_+$ then gives
\begin{align}
\f1\pi\int_{\E_+} \f{p_+(t)}{\sqrt{|t-\be_k||t-\al_0|}} \f{dt}{t-x} \leq \f{p_+(x)}{\sqrt{|x-\be_k||x-\al_0|}}. \lb{Refl2}
\end{align}
Similarly, noting that
\begin{equation}
\f{|t-x|p_+(t)\chi_{[\be_0,\al_k]\cup\E_+}(t)dt} {\pi\sqrt{|t-\be_0||t-\al_k||t-\be_k||t-\al_0|}}
\end{equation}
is the AC part of a reflectionless probability measure on $[\be_0,\al_k]\cup\E_+$ which is purely AC on $[\be_0,\al_k]$ yields
\begin{align}
\f1\pi\int_{[\be_0,\al_k]\cup\E_+} \f{|t-x|p_+(t)}{\sqrt{|t-\be_0||t-\al_k||t-\be_k||t-\al_0|}} \f{dt}{t-x} \leq 0. \lb{Refl3}
\end{align}
Thus, combining \eqref{Refl2} and \eqref{Refl3} with \eqref{Refl1} gives
\begin{align} \lb{Ellip0}
\int_{\E_+} \f{d\rho(t)}{t-x}
&\leq
\f{\ti p_-(x)}{\sqrt{|x-\be_0||x-\al_k|}}|x-\ti\ga_k| \f{p_+(x)}{\sqrt{|x-\be_k||x-\al_0|}} \no
\\
&-
\f{\ti p_-(x)}{\pi} \int_{\be_0}^{\al_k} \f{|t-x|p_+(t)}{\sqrt{|t-\be_0||t-\al_k||t-\be_k||t-\al_0|}} \f{dt}{t-x} \no
\\
&\leq
\ti w(x) + \f{\ti p_-(x) \ti p_+(x)}{\pi\sqrt{|x-\al_0|}} \int_{\be_0}^{\al_k} \f{dt}{\sqrt{|t-\be_0||t-\al_k||t-\be_k|}}.
\end{align}
We estimate the integral by considering two cases. If $\al_k-\be_0\leq\be_k-\al_k$, then we have $x-\be_0\leq\be_k-\be_0\leq2(\be_k-\al_k)$, and hence,
\begin{align} \lb{Ellip1}
&\int_{\be_0}^{\al_k} \f{dt}{\sqrt{|t-\be_0||t-\al_k||t-\be_k|}} \no
\\
&\quad\leq
\f{1}{\sqrt{|\al_k-\be_k|}}\int_{\be_0}^{\al_k} \f{dt}{\sqrt{|t-\be_0||t-\al_k|}}
\leq \f{\sqrt2\pi}{\sqrt{x-\be_0}}.
\end{align}
Otherwise, $\al_k-\be_0>\be_k-\al_k$ in which case we let $c=(\be_0+\al_k)/2$. Then $x-\be_0\leq\be_k-\be_0\leq2(\al_k-\be_0)=4(c-\be_0)$ and we have
\begin{align} \lb{Ellip2}
&\int_{\be_0}^{\al_k} \f{dt}{\sqrt{|t-\be_0||t-\al_k||t-\be_k|}} \no
\\
&\quad\leq
\f{\int_{\be_0}^c\f{dt}{\sqrt{|t-\be_0|}}}{\sqrt{|c-\al_k||c-\be_k|}}
+
\f{\int_{c}^{\al_k}\f{dt}{\sqrt{|t-\al_k||t-\be_k|}}}{\sqrt{|c-\be_0|}} \no
\\
&\quad= \f{2\sqrt{t-\be_0}\big\vert_{t={\be_0}}^{t=c}} {\sqrt{(\al_k-c)(\be_k-c)}} + \f{-2\log\big(\sqrt{\al_k-t}+\sqrt{\be_k-t}\big)\big\vert_{t=c}^{t=\al_k}} {\sqrt{c-\be_0}}
\no
\\
&\quad\leq \f{2}{\sqrt{\be_k-c}} + \f{2\log\big(\sqrt{2(\be_k-\be_0)}/\sqrt{\be_k-\al_k}\,\big)} {\sqrt{c-\be_0}} \no
\\
&\quad\leq \f{2}{\sqrt{x-\be_0}}\bigg[2 + \log2+\log\f{\be_k-\be_0}{\be_k-\al_k}\bigg].
\end{align}
In the next to last inequality we utilized the Cauchy--Schwarz inequality in the form $\sqrt a+\sqrt b\leq \sqrt{2(a+b)}$. Combining \eqref{Ellip0} with \eqref{Ellip1}--\eqref{Ellip2}, and noting that the estimate in \eqref{Ellip2} is larger than the one in \eqref{Ellip1} and that $2+\log2<3$, then gives
\begin{align}
\int_{\E_+} \f{d\rho(t)}{t-x}
&\leq \ti w(x) + \f{\ti p_-(x) \ti p_+(x)}{\sqrt{|x-\al_0||x-\be_0|}} \bigg[3+\log\f{\be_k-\be_0}{\be_k-\al_k}\bigg].
\end{align}
Since ${|x-\ti\ga_k|}/{\sqrt{|x-\al_k||x-\be_k|}} \geq 1$, we therefore have
\begin{align}
\int_{\E_+} \f{d\rho(t)}{t-x}
&\leq \ti w(x)\bigg[4+\log\f{\be_k-\be_0}{\be_k-\al_k}\bigg]. \lb{Refl4}
\end{align}
In a similar way, one obtains an upper bound for the integral over $\E_-$,
\begin{align}
\int_{\E_-} \f{d\rho(t)}{x-t}
&\leq \ti w(x)\bigg[4+\log\f{\al_0-\al_k}{\be_k-\al_k}\bigg]. \lb{Refl5}
\end{align}
The final step is to note that the integral on the LHS of \eqref{ReflEst1} and \eqref{ReflEst2} can be estimated in two ways, namely
\begin{align}
\int_\E \f{d\rho(t)}{|t-x|} = \bigg|2\int_{\E_\pm} \f{d\rho(t)}{t-x} - \int_\E \f{d\rho(t)}{t-x}\bigg| \leq 2\bigg|\int_{\E_\pm} \f{d\rho(t)}{t-x}\bigg| + \ti w(x).
\end{align}
Combining these estimates with \eqref{Refl4} and \eqref{Refl5}, respectively, and choosing the better bound then yield the result.
\end{proof}

\section{Abstract Lieb--Thirring Bounds}
\label{sec3}

In this section, we obtain Lieb--Thirring bounds for trace class and, more generally, Schatten class perturbations of a wide range of Jacobi matrices.
In particular, our results apply to perturbations of periodic and finite gap Jacobi matrices as well as to several infinite gap Jacobi matrices.
\begin{theorem} \lb{LTthm1}
Let $J$ and $J'$ be two-sided Jacobi matrices such that $\de J = J-J'$ is in the trace class, that is,
\begin{align}
\sum_{n\in\bb Z} |\de a_n|+|\de b_n|<\infty.
\end{align}
Let $\E=\si(J')$ and denote the gaps of $\E$ as in \eqref{SetE}. In addition, suppose there exist non-negative constants $\{C_k\}_{k\geq0}$ such that for some $1/2<p<1$,
\begin{align} \lb{CkCond1}
\sum_{k\geq1} C_k(\be_k-\al_k)^{p-1/2} < \infty
\end{align}
and such that the spectral measures $d\rho_n$ of $(J',\de_n)$ satisfy
\begin{align} \lb{RnCond1}
\sup_{n\in\bb Z}\int_\E \f{d\rho_n(t)}{|t-x|} \leq
\begin{cases}
\displaystyle{
\f{C_0}{|x-\al_0|^{1/2}|x-\be_0|^{1/2}}, \quad x\in(\al_0,\be_0),} \vspace{0.2cm}\\
\displaystyle{
\qquad \f{C_k}{\dist(x,\E)^{1/2}}, \quad x\in(\al_k,\be_k), \quad k\geq1}.
\end{cases}
\end{align}
Then $\si_\ess(J)=\E$ and the discrete eigenvalues of $J$ satisfy the Lieb--Thirring bound,
\begin{align} \lb{LTineq1}
\sum_{\la\in\si(J)\bs\E} \dist(\la,\E)^p \leq L_{p,\,J'} \sum_{n\in\bb Z} 4|\de a_n|+|\de b_n|,
\end{align}
where the constant $L_{p,\,J'}$ is independent of $\de J$ and explicitly given by
\begin{align}
L_{p,\,J'} =\f{p}{2p-1}\bigg( \f{C_0}{(1-p)(\al_0-\be_0)^{1-p}} + 2 \sum_{k\geq1} C_k\Big(\f{\be_k-\al_k}{2}\Big)^{p-1/2} \bigg).
\end{align}
\end{theorem}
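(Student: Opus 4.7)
The plan is a Birman--Schwinger argument paired with an Aizenman--Lieb integration in the spectral parameter, which then plugs into the trace norm estimate of Theorem \ref{TNEthm}.

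First, compactness of $\delta J$ and Weyl's theorem give $\sigma_\ess(J)=\mathcal E$. To feed $\delta J$ into Theorem \ref{TNEthm}, I factor
\[
\delta J = D^{1/2} M D^{1/2},
\]
where $D$ is the non-negative diagonal matrix with entries $D_{nn} = |\delta b_n| + 2|\delta a_{n-1}| + 2|\delta a_n|$, so that $\tr D \leq \sum_n(4|\delta a_n|+|\delta b_n|)$, and $M$ is tridiagonal with $\|M\|$ bounded by an explicit absolute constant (a direct estimate for tridiagonal matrices). The Birman--Schwinger principle then identifies, for each $\lambda\in\bb R\setminus\mathcal E$, the algebraic multiplicity of $\lambda$ as an eigenvalue of $J$ with that of $-1$ as an eigenvalue of the trace class operator
\[
K(\lambda) := M\, D^{1/2}(J'-\lambda)^{-1} D^{1/2},
\]
whose nonzero spectrum coincides with that of $-(J'-\lambda)^{-1}\delta J$.

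Next, the elementary identity
\[
\dist(\lambda,\mathcal E)^p = p\int_{I(\lambda)} \dist(x,\mathcal E)^{p-1}\,dx,
\]
where $I(\lambda)$ is the subinterval of the gap joining $\lambda$ to its nearer endpoint of $\mathcal E$, converts the weighted eigenvalue sum to a trace integral. Summing over $\lambda\in\sigma_d(J)$ with multiplicity and interchanging sum and integral, the count $N(x)$ of $J$-eigenvalues further from $\mathcal E$ than $x$ on the same side of the gap midpoint satisfies $N(x)\leq\|K(x)\|_1$ by a homotopy argument along $J_t = J' + t\,\delta J$: each such $\lambda$ lies on a continuous eigenvalue branch that by the intermediate value theorem must cross $x$ at some $t_*(\lambda)\in(0,1]$, forcing an eigenvalue $-1/t_*(\lambda)\in(-\infty,-1]$ of $-(J'-x)^{-1}\delta J$, and the Weyl singular-value inequality $\#\{n:|\mu_n(K(x))|\geq 1\}\leq\|K(x)\|_1$ supplies the bound. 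Applying Theorem \ref{TNEthm} and then hypothesis \eqref{RnCond1} yields
\[
S^p(J) \leq c_p\,\|M\|\,\|D\|_1 \bigg[\int_{(\alpha_0,\beta_0)} \frac{C_0\,\dist(x,\mathcal E)^{p-1}\,dx}{|x-\alpha_0|^{1/2}|x-\beta_0|^{1/2}} + \sum_{k\geq 1} C_k \int_{\alpha_k}^{\beta_k} \dist(x,\mathcal E)^{p-3/2}\,dx \bigg].
\]

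The remaining work is calculus. Each inner gap integral evaluates to $\frac{2}{p-1/2}\big(\frac{\beta_k-\alpha_k}{2}\big)^{p-1/2}$, and summability of the resulting series is exactly condition \eqref{CkCond1}. For the outer gap, the substitution $u=x-\alpha_0$ (and its symmetric counterpart on the lower half-line) reduces each half to the classical Beta integral
\[
\int_0^\infty \frac{u^{p-3/2}\,du}{(u+|\alpha_0-\beta_0|)^{1/2}} = |\alpha_0-\beta_0|^{p-1}\,B\big(p-\tfrac{1}{2},\,1-p\big),
\]
finite for $1/2<p<1$ and of order $(1-p)^{-1}$ as $p\uparrow 1$, producing the $(\alpha_0-\beta_0)^{-(1-p)}/(1-p)$ factor in $L_{p,J'}$. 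Collecting the constants gives \eqref{LTineq1}.

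The main obstacle is the counting step: because $(J'-x)^{-1}$ has both positive and negative parts inside each gap, $K(x)$ is genuinely non-self-adjoint, so no min--max characterization is available. One must therefore count singular values via the Weyl inequality and carefully track the eigenvalue branches of $J_t$ under the homotopy to assign each discrete eigenvalue of $J$ to the correct spectral parameter $x$ with its correct multiplicity, rather than to invoke a simpler self-adjoint BS principle.
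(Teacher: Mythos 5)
Your overall architecture---Birman--Schwinger plus the trace-norm estimate of Theorem \ref{TNEthm}, followed by integrating $\dist(x,\E)^{p-1}$ over the gaps and using \eqref{RnCond1} and \eqref{CkCond1}---is the paper's, and the calculus at the end (inner-gap integrals, Beta-type outer-gap integral) is fine. The genuine gap is the counting step, exactly the point you yourself flag as ``the main obstacle'' and then do not resolve. Your claim that every eigenvalue $\la$ of $J$ lying on the same side of the gap midpoint as $x$ with $\dist(\la,\E)\geq\dist(x,\E)$ sits on a branch of $J_t=J'+t\,\de J$ that crosses $x$ is false as stated: since $\de J$ is not sign-definite, the branches are not monotone in $t$, and an eigenvalue in, say, the left half of a gap $(\al_k,\be_k)$ may have entered from the right edge $\be_k$ and descended to $\la\geq x$ without ever passing through $x$. (A correct version must count crossings at both mirror points $\al_k+s$ and $\be_k-s$; that symmetrized count does reproduce your whole-gap integral, but it is not what your intermediate value argument establishes.) Moreover, even granting a crossing, identifying the number of crossings, with multiplicity, with the number of eigenvalues $\mu$ of the non-self-adjoint operator $K(x)$ satisfying $|\mu|\geq1$---so that Weyl's eigenvalue--singular value inequality can be invoked---is precisely the delicate content of the Birman--Schwinger principle for non-definite perturbations; in your sketch it is asserted rather than proved.

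The paper sidesteps both difficulties by splitting $\de J=\de J_+-\de J_-$ with $0\leq\de J_\pm\leq D_\pm$, where $D_\pm$ are diagonal, and applying the sign-definite, self-adjoint Birman--Schwinger principle of \cite[Theorem 1.4]{FS11}: the count $N\big(J\in(\ga_-,\ga_+)\big)$ is bounded by counts for the two monotone families $J'+s\,\de J_+$ at the left reference point $\ga_-$ and $J'-s\,\de J_-$ at the right reference point $\ga_+$, each controlled by the trace norm of a nonnegative self-adjoint BS operator, to which Theorem \ref{TNEthm} applies with $D_\pm$ in place of your single $D$. If you want to keep your factorization $\de J=D^{1/2}MD^{1/2}$, note also that any resulting bound carries the extra factor $\|M\|$ (which is about $2$), so even after repairing the counting you would obtain \eqref{LTineq1} only with a larger constant, not the explicit $L_{p,\,J'}$ stated in Theorem \ref{LTthm1}.
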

\begin{proof}
Assumption \eqref{RnCond1} implies that the spectral measures $d\rho_n$ of $J'$ cannot have point masses at the endpoints of $\E$ (i.e., $\{\al_k,\be_k\}_{k\geq0}$).
Thus, $J'$ has no isolated eigenvalues, and hence, $\si_\ess(J')=\si(J')=\E$. Weyl's theorem then yields $\si_\ess(J)=\E$ since $J$ is a compact perturbation of $J'$.

Let $(c)_\pm = \max(\pm c,0)$ and define tridiagonal matrices $\de J_\pm$ and diagonal matrices $D_\pm$ by
\begin{align}
&(\de J_\pm)_{n,n-1} = \pm\mfrac{1}{2} \de a_{n-1}, \;
(\de J_\pm)_{n,n+1} = \pm\mfrac{1}{2} \de a_n, \no
\\
&(\de J_\pm)_{n,n} = (\de b_n)_\pm + \mfrac{1}{2}|\de a_n| + \mfrac{1}{2}|\de a_{n-1}|, \lb{dJdef}
\\[1.5mm]
&(D_\pm)_{n,n} = (\de b_n)_\pm + |\de a_n| + |\de a_{n-1}|, \quad n\in\bb Z. \lb{Ddef}
\end{align}
Then $\de J = \de J_+ - \de J_-$ and $0 \leq \de J_\pm \leq D_\pm$ since
\begin{gather}
\begin{pmatrix}0 & \de a_n\\ \de a_n & 0\end{pmatrix} =
\f12\begin{pmatrix}|\de a_n| & \de a_n\\ \de a_n & |\de a_n|\end{pmatrix} -
\f12\begin{pmatrix}|\de a_n| & -\de a_n\\ -\de a_n & |\de a_n|\end{pmatrix},
\\[1mm]
0 \leq
\f12\begin{pmatrix}|\de a_n| & \pm \de a_n\\ \pm\de a_n & |\de a_n|\end{pmatrix}
\leq \begin{pmatrix}|\de a_n| & 0\\0 & |\de a_n|\end{pmatrix}.
\end{gather}

Let $N(J\in I)$ denote the number of eigenvalues of $J$ contained in an interval $I\subset\bb R\bsn\E$.
Then by a version of the Birman--Schwinger principle \cite[Theorem 1.4]{FS11}, for a.e.\ $\ga_\pm$ such that $[\ga_-,\ga_+]\subset\bb R\bsn\E$,
\begin{align} \lb{BrmSchw1}
& N\big(J \in  (\ga_-,\ga_+)\big) =
N\big(J' + \de J_+ - \de J_- \in (\ga_-,\ga_+)\big) \no
\\
&\; \leq N\big(\de J_+^{1/2}(J'-\ga_-)^{-1}\de J_+^{1/2}<-1\big)
+ N\big(\de J_-^{1/2}(J'-\ga_+)^{-1}\de J_-^{1/2}>1\big) \no
\\
&\; \leq \|\de J_+^{1/2}(J'-\ga_-)^{-1}\de J_+^{1/2}\|_1 + \|\de J_-^{1/2}(J'-\ga_+)^{-1}\de J_-^{1/2}\|_1 \no
\\
&\; \leq \|D_+^{1/2}(J'-\ga_-)^{-1}D_+^{1/2}\|_1 + \|D_-^{1/2}(J'-\ga_+)^{-1}D_-^{1/2}\|_1,
\end{align}
where the last inequality follows from the fact that $D_\pm\geq\de J_\pm\geq0$.
By assumption \eqref{RnCond1} and Theorem~\ref{TNEthm}, we get that
\begin{align} \lb{TrNormEst1}
\|D_\pm^{1/2}(J'-x)^{-1}D_\pm^{1/2}\|_1 \leq
\begin{cases}
\displaystyle{
\f{C_0\|D_\pm\|_1}{|x-\al_0|^{1/2}|x-\be_0|^{1/2}}, \quad x\in(\al_0,\be_0),} \vspace{0.2cm} \\
\displaystyle{
\qquad \f{C_k\|D_\pm\|_1}{\dist(x,\E)^{1/2}}, \quad x\in(\al_k,\be_k), \quad k\geq1.}
\end{cases}
\end{align}
Let $\ell_0=\infty$, $\ell_k=(\be_k-\al_k)/2$ for $k\geq1$, and set $d=|\al_0-\be_0|$. Then writing the LHS of \eqref{LTineq1} as
\begin{align}
\sum_{\la\in\si(J)\bs\E} \dist(\la,\E)^p = \sum_{k\geq0} \int_0^{\ell_k} (x^p)'N\big(J\in(\al_k+x,\be_k-x)\big)dx,
\end{align}
we can estimate using \eqref{BrmSchw1} and \eqref{TrNormEst1} to get
\begin{align}
&\sum_{\la\in\si(J)\bs\E} \dist(\la,\E)^p \leq \big( \|D_+\|_1+\|D_-\|_1 \big) \no
\\
&\qquad\times\bigg(
C_0 \int_0^{\infty} \f{px^{p-1}}{x^{1/2}(x+d)^{1/2}}dx +
\sum_{k\geq1} C_k\int_0^{\ell_k} \f{px^{p-1}}{x^{1/2}}dx \bigg).
\end{align}
As the first integral is bounded by
\begin{align}
\int_0^d \f{px^{p-1}}{x^{1/2}d^{1/2}}dx + \int_d^\infty\f{px^{p-1}}{x^{1/2}x^{1/2}}dx,
\end{align}
we have
\begin{align}
&\sum_{\la\in\si(J)\bs\E} \dist(\la,\E)^p \leq \big( \|D_+\|_1+\|D_-\|_1 \big) \no
\\
&\qquad \times \bigg(\f{p}{p-1/2} C_0 d^{p-1} + \f{p}{1-p} C_0 d^{p-1} +
\f{p}{p-1/2} \sum_{k\geq1} C_k \ell_k^{p-1/2} \bigg).
\end{align}
Combining this with \eqref{Ddef} then yields \eqref{LTineq1}.
\end{proof}

In the next theorem, we extend our Lieb--Thirring bounds to non trace class perturbations.
\begin{theorem} \lb{LTthm2}
Let $J$ and $J'$ be two-sided Jacobi matrices such that $\de J = J-J'$ is in the Schatten class $\cc S_p$ for some $p>1$, that is, $($cf.\ \cite[Lemma 2.3]{KS03}$)$
\begin{align}
\sum_{n\in\bb Z} |\de a_n|^p+|\de b_n|^p < \infty.
\end{align}
Let $\E=\si(J')$ and denote the gaps of $\E$ as in \eqref{SetE}. In addition, suppose there exist non-negative constants $\{C_k\}_{k\geq0}$ such that
\begin{align} \lb{CkCond2}
\sum_{k\geq0} C_k < \infty
\end{align}
and such that the spectral measures $d\rho_n$ of $(J',\de_n)$ satisfy
\begin{align} \lb{RnCond2}
\sup_{n\in\bb Z}\int_\E \f{d\rho_n(t)}{|t-x|} \leq \f{C_k}{\dist(x,\E)^{1/2}}, \quad x\in(\al_k,\be_k), \quad k\geq0.
\end{align}
Then $\si_\ess(J)=\E$ and the discrete eigenvalues of $J$ satisfy the Lieb--Thirring bound
\begin{align} \lb{LTineq2}
\sum_{\la\in\si(J)\bs\E} \dist(\la,\E)^{p-1/2} \leq L_{p,\,J'} \sum_{n\in\bb Z} 4|\de a_n|^p+|\de b_n|^p,
\end{align}
where the constant $L_{p,\,J'}$ is independent of $\de J$ and explicitly given by
\begin{align}
L_{p,\,J'} = 2^{p-3/2}3^{p-1}\f{2p-1}{p-1} \sum_{k\geq0} C_k.
\end{align}
\end{theorem}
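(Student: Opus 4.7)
I would follow the blueprint of the proof of Theorem~\ref{LTthm1}, replacing the trace-class Birman--Schwinger argument by its Schatten-$\cc S_p$ analogue.

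First, decompose $\de J=\de J_+-\de J_-$ and dominate each part by a diagonal matrix $D_\pm\geq\de J_\pm$ exactly as in \eqref{dJdef}--\eqref{Ddef}. The hypothesis $\de J\in\cc S_p$ combined with the elementary inequality $(a+b+c)^p\leq 3^{p-1}(a^p+b^p+c^p)$ yields $\|D_+\|_{\cc S_p}^p+\|D_-\|_{\cc S_p}^p\leq 3^{p-1}\sum_n(4|\de a_n|^p+|\de b_n|^p)$, which accounts for the factor $3^{p-1}$ in the stated constant $L_{p,\,J'}$.

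Next, I would apply the Birman--Schwinger inequality \cite[Theorem~1.4]{FS11} as in \eqref{BrmSchw1}, but upgrade each counting function through the Schatten-class bound $N(A>1)\leq\|A\|_{\cc S_p}^p$, valid for $A\geq 0$ and $p\geq 1$. Splitting $(J'-x)^{-1}=R_+-R_-$ as in \eqref{Rpm} and using the monotonicity $0\leq X\leq Y\Rightarrow \tr(X^p)\leq \tr(Y^p)$ for $p\geq 1$, the estimate reduces to controlling $\|D_\pm^{1/2}R_\mp D_\pm^{1/2}\|_{\cc S_p}^p$ with $R_\pm$ evaluated at the appropriate endpoint of the gap. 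The Araki--Lieb--Thirring inequality $\tr((D^{1/2}RD^{1/2})^p)\leq \tr(D^pR^p)$, together with the identity $R_\pm^p = P_{\E_\pm}(J')|J'-x|^{-p}$ coming from commutativity of $J'$ and $P_{\E_\pm}(J')$, then converts the problem into a spectral-measure integral
\[
\|D^{1/2}R_\pm D^{1/2}\|_{\cc S_p}^p \leq \sum_n D_{n,n}^p\int_{\E_\pm}\f{d\rho_n(t)}{|t-x|^p}.
\]
Combining the trivial pointwise bound $|t-x|^{-(p-1)}\leq\dist(x,\E)^{-(p-1)}$ with the standing hypothesis \eqref{RnCond2} gives $\|D^{1/2}R_\pm D^{1/2}\|_{\cc S_p}^p\leq C_k\|D\|_{\cc S_p}^p/\dist(x,\E)^{p-1/2}$ for every $x\in(\al_k,\be_k)$.

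The last step is layer-cake integration on each gap combined with summation in $k$:
\[
\sum_{\la\in\si(J)\bs\E}\dist(\la,\E)^{p-1/2} = (p-\tfrac12)\sum_{k\geq 0}\int_0^{\ell_k} y^{p-3/2}\,N\bigl(J\in(\al_k+y,\be_k-y)\bigr)\,dy.
\]
The main obstacle I anticipate is that a literal substitution of the Schatten bound $\dist^{-(p-1/2)}$ produces the borderline-divergent $\int y^{-1}\,dy$ at $y=0$. I expect the fix to come from evaluating the Birman--Schwinger operators not at the endpoints $\al_k+y,\,\be_k-y$ but at shifted points inside the gap, e.g.\ $\al_k+y/2$ and $\be_k-y/2$, so that $\dist(\ga_\pm,\E)$ remains a definite multiple of $y$ while a tighter version of the counting estimate (separately controlling each of the two Birman--Schwinger summands) produces the integrable $\int y^{p-2}\,dy$ behaviour. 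This refinement is consistent with the appearance of the factor $(2p-1)/(p-1)$ in $L_{p,\,J'}$, which is finite precisely on the stated range $p>1$ and exhibits a simple pole at the excluded endpoint $p=1$.
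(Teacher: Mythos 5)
Your opening moves coincide with the paper's: the decomposition $\de J=\de J_+-\de J_-$ dominated by the diagonals $D_\pm$ of \eqref{dJdef}--\eqref{Ddef}, the Birman--Schwinger principle of \cite[Theorem 1.4]{FS11}, the monotonicity step $N^\pm_\la(J',\de J_\pm)\leq N^\pm_\la(J',D_\pm)$, and the final convexity argument producing the factor $3^{p-1}$. However, the core of your Schatten-class extension has a genuine gap. Your route via $N(A>1)\leq\tr A^p$ and Araki--Lieb--Thirring gives, under the only available hypothesis \eqref{RnCond2}, the bound $N\big(J\in(\al_k+y,\be_k-y)\big)\lesssim C_k\big(\sum_n (d_n^+)^p+(d_n^-)^p\big)\,y^{-(p-1/2)}$, and as you note the layer-cake integral then contains $\int_0^{\ell_k}y^{p-3/2}\,y^{-(p-1/2)}\,dy=\int_0^{\ell_k}y^{-1}dy$, which diverges. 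The fix you propose --- evaluating the Birman--Schwinger operators at the shifted points $\al_k+y/2$, $\be_k-y/2$ --- cannot repair this: it only replaces $\dist(\la,\E)=y$ by $y/2$, i.e.\ changes the estimate by the constant factor $2^{p-1/2}$, and leaves the power of $y$ untouched. Nor can one interpolate by using $\|\cdot\|_{\cc S_q}^q$ with $q<p$, since $\de J\in\cc S_p$ gives no control of $\sum_n(d_n^\pm)^q$. The divergence is intrinsic to any bound that is homogeneous in $y$ with a $y$-independent coefficient.

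The missing idea in the paper's proof is a truncation of the perturbation at a scale tied to the distance to the spectrum. One sets $(D_{\pm,r})_{n,n}=((D_\pm)_{n,n}-r)_+$ with $r=y/2$; since $\|D_\pm-D_{\pm,r}\|\leq r$, the eigenvalue branches of $J'+D_{\pm,r}+x(D_\pm-D_{\pm,r})$ move by at most $r$, so $N^\pm_\la(J',D_\pm)\leq N^\pm_{\la\mp r}(J',D_{\pm,r})$ (this is the real reason for evaluating at shifted energies --- it is forced by the truncation, not a device for improving powers). Because $D_{\pm,r}$ is finite rank, one may now apply the trace-norm estimate of Theorem~\ref{TNEthm} together with \eqref{RnCond2}, keeping the power $\dist(\la\mp r,\E)^{-1/2}$ exactly as in the trace-class case; the whole gain comes from the $y$-dependent coefficient $\|D_{\pm,y/2}\|_1=\sum_n\big(d_n^\pm-\tfrac y2\big)_+$, which cuts off the $n$th term for $y>2d_n^\pm$. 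After exchanging the sum over $n$ with the $y$-integral, each term contributes $\int_0^{\ell_k}(2d_n^\pm-y)_+\,y^{p-2}dy\leq(2d_n^\pm)^p/(p-1)$, finite precisely for $p>1$ and producing the stated factor $(2p-1)/(p-1)$. Without this truncation mechanism your argument stops at the logarithmically divergent integral, so the proposal as written does not prove the theorem.
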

\begin{proof}
As in the previous theorem, assumption \eqref{RnCond2} implies that $J'$ has no isolated eigenvalues. 
Since $J$ is a compact perturbation of $J'$, it follows that $\si_\ess(J)=\E$.

Define compact operators $\de J_\pm$ and $D_\pm$ as in \eqref{dJdef}--\eqref{Ddef}.
Then $\de J=\de J_+ - \de J_-$ and $0\leq\de J_\pm \leq D_\pm$.
Let $N(J\in I)$ denote the number of eigenvalues of $J$ contained in an interval $I\subset\bb R\bsn\E$.
For $\la\in\bb R\bsn\E$, we denote by $N^\pm_\la(J',\de J_\pm)$ the number of eigenvalues of $J'\pm x\de J_\pm$ that pass through $\la$ as $x$ runs
through the interval $(0,1)$. By a version of the Birman--Schwinger principle \cite[Theorem 1.4]{FS11}), for a.e.\ $\ga_\pm$ such that $[\ga_-,\ga_+]\subset\bb R\bsn\E$,
\begin{align} \lb{BrmSchw2.1}
& N\big(J\in(\ga_-,\ga_+)\big) \leq N^+_{\ga_-}(J',\de J_+) + N^-_{\ga_+}(J',\de J_-),
\\[1mm]
&\ N^\pm_{\la}(J',\de J_\pm) = N\big(\de J_\pm^{1/2}(J'-\la)^{-1}\de J_\pm^{1/2}\lessgtr \mp 1\big).
\end{align}
Since $D_\pm\geq\de J_\pm\geq0$, we have $N^\pm_{\la}(J',\de J_\pm) \leq N^\pm_{\la}(J',D_\pm)$, and hence,
\begin{align} \lb{BrmSchw2.2}
N\big(J\in(\ga_-,\ga_+)\big) \leq N^+_{\ga_-}(J',D_+) + N^-_{\ga_+}(J',D_-).
\end{align}
To handle non trace class perturbations, we estimate further in terms of finite rank truncated versions of $D_\pm$.
For this, let $0<r<\dist(\la,\E)$ and define the finite rank diagonal matrices $D_{\pm,r}$ by
\begin{equation}
(D_{\pm,r})_{n,n} = ((D_\pm)_{n,n}-r)_+.
\end{equation}
Then $\|D_\pm-D_{\pm,r}\| \leq r$ so the eigenvalues of $J'+D_{\pm,r}+x(D_\pm-D_{\pm,r})$ can move a distance of no more than $r$
as $x$ ranges from $0$ to $1$. Thus,
\begin{align} \lb{SpEst2.2}
N^\pm_\la(J',D_\pm) \leq N^\pm_{\la\mp r}(J',D_{\pm,r}) = N\big(D_{\pm,r}^{1/2}(J'-\la\pm r)^{-1}D_{\pm,r}^{1/2}\lessgtr \mp 1\big).
\end{align}
Estimating the RHS by the trace norm, applying Theorem~\ref{TNEthm}, and using the assumption \eqref{RnCond2} then yield
\begin{align} \lb{TrNormEst2}
N^\pm_\la(J',D_\pm) \leq
\|D_{\pm,r}^{1/2}(J'-\la\pm r)^{-1}D_{\pm,r}^{1/2}\|_1
\leq \f{C_k\|D_{\pm,r}\|_1}{\dist(\la\mp r,\E)^{1/2}}
\end{align}
whenever $\la\mp r\in(\al_k,\be_k)$, $k\geq0$.

Let $\ell_0=\infty$, $\ell_k=(\be_k-\al_k)/2$ for $k\geq1$, and $d^\pm_n=(D_\pm)_{n,n}$ for $n\in\bb Z$.
Applying \eqref{BrmSchw2.2} to an interval $[\al_k+x,\be_k-x]$ and using \eqref{TrNormEst2} with $r=x/2$ then gives for a.e.\ $x\in(0,\ell_k)$, $k\geq0$,
\begin{align} \lb{Est2}
N\big(J\in(\al_k+x,\be_k-x)\big) &\leq
N^+_{\al_k+x}(J',D_+) + N^-_{\be_k-x}(J',D_-) \no
\\
&\leq
C_k\big(\|D_{+,\f x2}\|_1+\|D_{-,\f x2}\|_1\big)(x/2)^{-1/2}
\\
&\leq
C_k\sum_{n\in\bb Z}\big((2d^+_n-x)_++(2d^-_n-x)_+\big)(2x)^{-1/2}. \no
\end{align}
Write the LHS of \eqref{LTineq2} as an integral and estimate by use of \eqref{Est2} to get
\begin{align}
&\sum_{\la\in\si(J)\bs\E} \dist(\la,\E)^{p-1/2} = \sum_{k\geq0} \int_0^{\ell_k} (x^{p-1/2})'N\big(J\in(\al_k+x,\be_k-x)\big)dx \no
\\
&\quad \leq
\f{2p-1}{2^{3/2}} \sum_{k\geq0} C_k \int_0^{\ell_k} \sum_{n\in\bb Z}  \big((2d^+_n-x)_++(2d^-_n-x)_+\big)x^{p-2}dx.
\end{align}
Rearranging the integral and the sum over $n$ by the monotone convergence theorem and estimating the integrals by
\begin{align}
\int_0^{\ell_k}(2d^\pm_n-x)_+x^{p-2}dx \leq \int_0^{2d^\pm_n}2d^\pm_n x^{p-2}dx \leq \f{(2d^\pm_n)^p}{p-1},
\end{align}
give
\begin{align}
&\sum_{\la\in\si(J)\bs\E} \dist(\la,\E)^{p-1/2}
\leq
2^{p-3/2}\f{2p-1}{p-1}\sum_{k\geq0} C_k\sum_{n\in\bb Z} (d^+_n)^p+(d^-_n)^p.
\end{align}
Recalling \eqref{Ddef} and using Jensen's convexity inequality lead to \eqref{LTineq2}.
\end{proof}

Several remarks pertaining to the previous two theorems are in order.
\begin{remark} \lb{LTremark}
(a) The Jacobi matrix $J'$ is not required to be reflectionless, that is, $J'$ is not necessarily from the isospectral torus $\cc T_\E$.
The only restrictions on $J'$ are the conditions \eqref{CkCond1}--\eqref{RnCond1} in Theorem~\ref{LTthm1} and \eqref{CkCond2}--\eqref{RnCond2} in Theorem~\ref{LTthm2}, respectively.

(b) If $E$ is a finite gap set and $J'\in\cc T_\E$, then the assumptions \eqref{CkCond1}--\eqref{RnCond1} and \eqref{CkCond2}--\eqref{RnCond2}
are trivially satisfied. In this case, the first theorem extends a result of \cite{HS08} by providing an explicit constant for the RHS of \eqref{LTineq1}
and the second theorem complements a recent result of \cite{FS11} for $p=1/2$.

(c) If $E$ is a homogeneous set and $J'\in\cc T_\E$, then the spectral measures $d\rho_n$ of $J'$ are absolutely continuous (cf., e.g., \cite{PR09,PSZ10}),
and hence, by Theorem \ref{ReflEstThm} it is possible to replace
\begin{equation}
\sup_{n\in\bb Z}\int_\E\f{d\rho_n(t)}{|t-x|} \quad \mbox{ by} \quad \sup_{d\mu\in\cc R_\E}\bigg|\int_\E\f{d\mu(t)}{t-x}\bigg|
\end{equation}
while simultaneously changing
\begin{equation}
C_k \quad \mbox{to} \quad C_k/\log\f{\al_0-\be_0}{\be_k-\al_k}, \quad k\geq1,
\end{equation}
in \eqref{RnCond1} and \eqref{RnCond2}, respectively. In this case, the constants $L_{p,\,J'}$ in \eqref{LTineq1} and \eqref{LTineq2} are replaced
by a constant $L_{p,\,\E}$ which is uniform in $J'\in\cc T_\E$ and only depends on $p$ and $\E$.

(d) Theorems \ref{LTthm1}--\ref{LTthm2} also extend to perturbations of Jacobi matrices $J'$ that exhibit a different behavior near the gaps edges.
For example, if $J'$ satisfies \eqref{RnCond1} and \eqref{RnCond2} with power $1/2$ replaced by $1/2+q$ for some $q\geq0$, then the Lieb--Thirring bounds continue
to hold with $p$ replaced by $p+q$ on the LHS of \eqref{LTineq1} and \eqref{LTineq2} and appropriately adjusted constants $L_{p,\,J'}$.

(e) By the Aronszajn--Donoghue theory of rank one perturbations (see, for example, \cite[Sect.~12.2]{Si05}), $\la\in\bb R\bsn\E$ is an eigenvalue of a rank one
perturbation $J=J'+\de b_n\langle\de_n,\,\cdot\,\rangle\de_n$ if and only if
\begin{align}
\label{rank one}
\int_\E\f{d\rho_n(t)}{t-\la}=\bigl\langle\de_n,(J'-\la)^{-1}\de_n\bigr\rangle = -\f{1}{\de b_n}.
\end{align}
Thus, a necessary condition for the following Lieb--Thirring bound
\begin{align} \lb{pqLTineq}
\sum_{\la\in\si(J)\bs\E} \dist(\la,\E)^{p} \leq  L_{p,\,q}\sum_{n\in\bb Z} |\de a_n|^q+|\de b_n|^q, \quad q>p>0,
\end{align}
to hold is
\begin{align}
\bigg|\int_\E\f{d\rho_n(t)}{t-\la}\bigg| = \f{1}{|\de b_n|} \leq \f{L_{p,\,q}^{1/q}}{\dist(\la,\E)^{p/q}}.
\end{align}
Moreover, since
\begin{equation}
\int_\E\f{d\rho_n(t)}{|t-x|}-\bigg|\int_\E\f{d\rho_n(t)}{t-x}\bigg|
\end{equation}
is bounded in each gap, the conditions
\begin{align}
\sup_{n\in\bb Z}\int_\E \f{d\rho_n(t)}{|t-x|} \leq \f{C_k}{\dist(x,\E)^{p/q}}, \quad x\in(\al_k,\be_k), \quad k\geq0,
\end{align}
for some constants $C_k>0$ are necessary for \eqref{pqLTineq} to hold. Thus, the assumptions \eqref{RnCond1} and \eqref{RnCond2} in our theorems are close to being necessary.
\end{remark}

\section{Examples}
\label{sec4}

In this section, we obtain Lieb--Thirring bounds for perturbations of Jacobi matrices from the isospectral tori, $\cc T_\E$, for two explicit classes of homogeneous infinite gap sets. The isospectral torus associated with a homogeneous set $\E$ is known to consist of almost periodic Jacobi matrices, see \cite{JSC2, SY97}. We also recall that reflectionless measures on homogeneous sets are necessarily absolutely continuous \cite{PR09,PSZ10}.

\subsection{Infinite Band Example}
\label{sec4a}

In this subsection, we consider an explicit example of a compact set $\E$ which consists of infinitely many disjoint intervals that accumulate
at $\inf\E$. Suppose $\{\eps_k\}_{k=1}^\infty\subset(0,1)$ and let
\begin{equation} \lb{InfBandSetE}
\E=\bigcap_{k=0}^\infty\E_k,
\end{equation}
where $\E_0=[\be_0,\al_0]$ and $\E_{k}$ is the compact set obtained from $\E_{k-1}$ by removing the middle $\eps_{k}$ portion from the first
of the $k$ bands in $\E_{k-1}$. We will denote the gap at level $k$ by $(\al_k,\be_k)$, that is,
\begin{equation}
(\al_k,\be_k)=\E_{k-1}\bsn\E_{k}, \quad k\geq1.
\end{equation}
It is easy to see that $\E$ is a homogeneous set if and only if $\sup_{k\geq1}\eps_k<1$.
\begin{theorem} \lb{InfBandE}
Suppose $\E$ is the infinite band set constructed in \eqref{InfBandSetE}. If $\sum_{k=1}^\infty \eps_k<\infty$, then for some constant $C>0$,
\begin{align} \lb{InfBandEst}
\sup_{d\rho\in\cc R_\E}\bigg|\int_\E\f{d\rho(t)}{t-x}\bigg| \leq
\begin{cases}
\ds\f{C}{|x-\al_0|^{1/2}|x-\be_0|^{1/2}}, \quad x\in(\al_0,\be_0), \\[5mm]
\ds\qquad\f{C\sqrt{\eps_k}}{\dist(x,\E)^{1/2}}, \quad x\in(\al_k,\be_k), \quad k\geq1.
\end{cases}
\end{align}
Conversely, if
\begin{align} \lb{InfBandEst2}
\limsup_{x\nearrow\be_0}\, |x-\be_0|^{1/2}\!\sup_{d\rho\in\cc R_\E}\bigg|\int_\E\f{d\rho(t)}{t-x}\bigg| < \infty,
\end{align}
then $\sum_{k=1}^\infty \eps_k<\infty$.
\end{theorem}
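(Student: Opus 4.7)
The plan is to exploit the explicit parameterization \eqref{m fct} of reflectionless Herglotz functions on $\E$. For $x\in\bb R\bsn\E$, choosing $\ga_j\in[\al_j,\be_j]$ so as to maximize $|x-\ga_j|$ yields
\[
\sup_{d\rho\in\cc R_\E}\bigg|\int_\E \f{d\rho(t)}{t-x}\bigg|
\leq \f{1}{\sqrt{|x-\be_0||x-\al_0|}} \prod_{j\geq1}\sqrt{\f{\max(|x-\al_j|,|x-\be_j|)}{\min(|x-\al_j|,|x-\be_j|)}}.
\]
The geometry of the construction gives the ordering $\be_0<\cdots<\al_3<\be_3<\al_2<\be_2<\al_1<\be_1<\al_0$, together with the relations $\al_k-\be_0=\f{1-\eps_k}{2}(\al_{k-1}-\be_0)$ and $\be_k-\al_k=\eps_k(\al_{k-1}-\be_0)$; in particular $\be_k-\al_k$ and $\al_k-\be_0$ are comparable with ratio $2\eps_k/(1-\eps_k)$.

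For $x\in(\al_k,\be_k)$ with $k\geq1$, I would split the product into the singular factor $j=k$ and the regular factors $j\neq k$. The $j=k$ factor is $\sqrt{|x-\ti\ga_k|/\dist(x,\E)}\leq\sqrt{(\be_k-\al_k)/\dist(x,\E)}$. For $j>k$ one has $x>\be_j$ with $x-\be_j\geq\al_{j-1}-\be_j=\f{1-\eps_j}{2}(\al_{j-1}-\be_0)$, leading to a ratio bounded by $(1+\eps_j)/(1-\eps_j)$. For $j<k$ one has $x<\al_j$; the binding case is $j=k-1$ where $\al_{k-1}-x\geq\al_{k-1}-\be_k=\f{1-\eps_k}{2}(\al_{k-1}-\be_0)$, while for $j\leq k-2$ the lower bound $\al_j-x\geq\al_j-\al_{j+1}\geq(\al_j-\be_0)/2$ is available, and in both cases the ratio is bounded by $1+C\eps_j/((1-\eps_j)(1-\eps_k))$. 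Under $\sum\eps_j<\infty$ (which forces $\eps_j\to 0$), the two products over $j\neq k$ are bounded uniformly in $k$. Combining the $j=k$ contribution with $1/\sqrt{|x-\be_0|}\leq1/\sqrt{\al_k-\be_0}$ produces the factor $\sqrt{2\eps_k/(1-\eps_k)}$, while $1/\sqrt{|x-\al_0|}\leq1/\sqrt{\al_0-\be_1}$ is a bounded constant (since $x<\be_k\leq\be_1<\al_0$), delivering the desired $C\sqrt{\eps_k}/\sqrt{\dist(x,\E)}$. The outer-gap case $k=0$ is handled analogously, using $x-\be_j\geq\al_0-\be_j\geq\f{1-\eps_1}{2}(\al_0-\be_0)$ when $x>\al_0$ (and symmetrically for $x<\be_0$) to bound the full product.

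For the converse, I would specialize to the reflectionless measure corresponding to $\ga_j=\be_j$ for every $j\geq1$. For $x<\be_0$ this choice gives
\[
|m(x)| = \f{1}{\sqrt{(\be_0-x)(\al_0-x)}} \prod_{j\geq1}\sqrt{\f{\be_j-x}{\al_j-x}}.
\]
Since each factor $(\be_j-x)/(\al_j-x)=1+(\be_j-\al_j)/(\al_j-x)$ is $\geq 1$ and increasing in $x$ on $(-\infty,\be_0)$, for any fixed $N$ the partial product satisfies
\[
\lim_{x\nearrow\be_0}\prod_{j=1}^N\f{\be_j-x}{\al_j-x} = \prod_{j=1}^N\f{\be_j-\be_0}{\al_j-\be_0} = \prod_{j=1}^N\f{1+\eps_j}{1-\eps_j},
\]
which diverges as $N\to\infty$ whenever $\sum\eps_j=\infty$. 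Since $|m(x)|\geq\prod_{j=1}^N\sqrt{(\be_j-x)/(\al_j-x)}\big/\sqrt{(\be_0-x)(\al_0-x)}$ for every fixed $N$, letting $x\nearrow\be_0$ first and then $N\to\infty$ would force $\liminf_{x\nearrow\be_0}|x-\be_0|^{1/2}|m(x)|=+\infty$, contradicting \eqref{InfBandEst2}.

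The hard part will be the uniform control of $\prod_{j<k}$ in the forward direction, specifically isolating the binding case $j=k-1$ where $\al_j-x$ is only comparable to the band $[\be_k,\al_{k-1}]$ rather than to $(\al_j-\be_0)$; the resulting $1/(1-\eps_k)$ factor is absorbed because $\sum\eps_k<\infty$ implies $1-\eps_k\geq1/2$ for all but finitely many $k$, with the remaining small-$k$ terms contributing only a finite multiplicative constant.
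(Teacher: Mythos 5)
Your proposal is correct and takes essentially the same route as the paper: both use the Sodin--Yuditskii parameterization \eqref{m fct}, bound the off-diagonal factors $j\neq k$ by $\sqrt{1+O(\eps_j)}$ via the band/gap ratio $g_j/b_j=2\eps_j/(1-\eps_j)$, extract $\sqrt{\eps_k}$ from the singular factor combined with $1/\sqrt{\al_k-\be_0}$, and prove the converse by choosing $\ga_j=\be_j$ and lower-bounding partial products as $x\nearrow\be_0$. The only cosmetic difference is that the paper bounds all $j<k$ factors uniformly via $\al_j-\be_k\geq b_{j+1}$, whereas you isolate $j=k-1$ separately; your converse is also stated with slightly more care about exchanging the $x$-limit with the infinite product.
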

\begin{proof}
First assume $\sum_{k=1}^\infty\eps_k<\infty$ and let $d\rho$ be a reflectionless probability measure on $\E$. Fix $k\geq1$ and define
\begin{align} \lb{InfBand-pp}
& p_+(x) = \prod_{j=1}^{k-1}\f{|x-\ga_j|}{\sqrt{|x-\al_j||x-\be_j|}}, \quad
p_-(x) = \prod_{j=k+1}^\infty\f{|x-\ga_j|}{\sqrt{|x-\al_j||x-\be_j|}},
\end{align}
where $\ga_j\in[\al_j,\be_j]$, $j\geq1$, are chosen in such a way that
\begin{align} \lb{InfBand-rp}
&d\rho(t) = \f{p_-(t)|t-\ga_k|p_+(t)\chi_\E(t)dt}{\pi\sqrt{|t-\be_0||t-\al_k||t-\be_k||t-\al_0|}}.
\end{align}
Equivalently,
\begin{align} \lb{InfBand-wp}
&\bigg|\int_\E\f{d\rho(t)}{t-x}\bigg| = \f{p_-(x)|x-\ga_k|p_+(x)}{\sqrt{|x-\be_0||x-\al_k||x-\be_k||x-\al_0|}}, \quad x\in\bb R\bsn\E.
\end{align}
In addition, let $b_0=\al_0-\be_0$ and
\begin{equation}
b_j = \al_j-\be_0 = \al_{j-1}-\be_j, \quad g_j=\be_j-\al_j, \quad j\geq1,
\end{equation}
be the band and gap lengths at level $j$. Then it follows from the construction of $\E_j$ that
\begin{align}
& b_j = \f{1-\eps_j}{2}b_{j-1}, \quad g_j = \eps_j b_{j-1} = \f{2\eps_j}{1-\eps_j}b_j, \quad j\geq1.
\end{align}
Letting $c=\min_{j\geq1}(1-\eps_j)(1-\eps_{j+1})$, we can estimate $p_\pm(x)$ as follows
\begin{align} \lb{InfBand-up1}
p_+(x) &\leq \prod_{j=1}^{k-1}\sqrt{\f{|x-\be_j|}{|x-\al_j|}}
\leq
\prod_{j=1}^{k-1}\sqrt{\f{\be_j-\be_k}{\al_j-\be_k}}
\leq
\exp\bigg\{\f12\sum_{j=1}^{k-1}\f{\be_j-\al_j}{\al_j-\be_k}\bigg\}\no
\\
&\leq
\exp\bigg\{\f12\sum_{j=1}^{k-1}\f{g_j}{b_{j+1}}\bigg\}
\leq
\exp\bigg\{\f2c\sum_{j=1}^{k-1}\eps_j\bigg\},
\quad
x \leq \be_k,
\end{align}
and similarly,
\begin{align} \lb{InfBand-up2}
p_-(x) &\leq \prod_{j=k+1}^{\infty}\sqrt{\f{|x-\al_j|}{|x-\be_j|}}
\leq
\prod_{j=k+1}^{\infty}\sqrt{\f{\al_k-\al_j}{\al_k-\be_j}}
\leq \exp\bigg\{\f12\sum_{j=k+1}^\infty\f{\be_j-\al_j}{\al_k-\be_j}\bigg\} \no
\\
&\leq
\exp\bigg\{\f12\sum_{j=k+1}^\infty\f{g_j}{b_j}\bigg\}
\leq
\exp\bigg\{\f1c\sum_{j=k+1}^{\infty}\eps_j\bigg\}, \quad
x\geq \al_k.
\end{align}

Now suppose $x\in(\al_k,\be_k)$. Then since $\ga_k\in[\al_k,\be_k]$ and
\begin{equation}
\be_k-\al_k=\f{2\eps_k}{1-\eps_k}(\al_k-\be_0),
\end{equation}
the estimates \eqref{InfBand-up1}--\eqref{InfBand-up2} combined with \eqref{InfBand-wp} yield
\begin{align}
\bigg|\int_\E\f{d\rho(t)}{t-x}\bigg| &\leq \f{\exp\big\{\f2c\sum_{j=1}^{\infty}\eps_j\big\}} {\sqrt{|x-\be_0||x-\al_0|}} \f{|x-\ga_k|}{\sqrt{|x-\al_k||x-\be_k|}} \no
\\
&\leq \f{\exp\big\{\f2c\sum_{j=1}^{\infty}\eps_j\big\}} {\sqrt{|\al_k-\be_0||\be_k-\al_0|}} \sqrt{\f{\be_k-\al_k}{\dist(x,\E)}}
\leq \f{C\sqrt{\eps_k}}{\dist(x,\E)^{1/2}},
\end{align}
where $C$ is a constant that depends only on $\E$. This proves the second and more involved part of \eqref{InfBandEst}.

To handle the case of $x\in(\al_0,\be_0)$, let $p_+(x)$ and $p_-(x)$ be defined as in \eqref{InfBand-pp} but with $k=\infty$ and $k=0$, respectively. Then
\begin{align} \lb{InfBand-wp2}
\bigg|\int_\E\f{d\rho(t)}{t-x}\bigg|=\f{p_+(x)}{\sqrt{|x-\be_0||x-\al_0|}} = \f{p_-(x)}{\sqrt{|x-\be_0||x-\al_0|}}
\end{align}
and just as for the above estimates, we get $p_+(x)\leq \exp\big\{\f2c\sum_{j=1}^\infty\eps_j\big\}$ for $x\leq \be_0$
and $p_-(x)\leq \exp\big\{\f1c\sum_{j=1}^\infty\eps_j\big\}$ for $x\geq\al_0$. Thus, \eqref{InfBandEst} follows.

For the converse direction, assume that \eqref{InfBandEst2} holds. Let $d\rho$ be the reflectionless measure on $\E$ that corresponds to $\ga_j=\be_j$
for every $j\geq1$ and let $p_+(t)$ be defined as in \eqref{InfBand-pp} with $k=\infty$. Then $p_+(x)\to p_+(\be_0)$ as $x\nearrow\be_0$
and since $1+x\geq\exp(x/2)$ for $x\in[0,2]$, we have
\begin{align} \lb{InfBand-low}
p_+(\be_0) &= \prod_{j=1}^{\infty}\sqrt{\f{\be_j-\be_0}{\al_j-\be_0}}
= \prod_{j=1}^{\infty}\sqrt{1+\f{g_j}{b_j}} \no
\\
&\geq \prod_{j=1}^{\infty}\sqrt{1+2\eps_j}
\geq \exp\bigg\{\f12\sum_{j=1}^{\infty}\eps_j\bigg\}.
\end{align}
Thus, $\sum_{j=1}^\infty\eps_j<\infty$ follows from \eqref{InfBand-low}, \eqref{InfBand-wp2}, and \eqref{InfBandEst2}.
\end{proof}

Our abstract results in Theorems~\ref{LTthm1} and \ref{LTthm2} combined with the estimate derived in Theorems~\ref{InfBandE} and \ref{ReflEstThm} yield the
following Lieb--Thirring bounds.

\begin{theorem} \lb{InfBandLT}
Let $\E$ be the infinite band set constructed in \eqref{InfBandSetE}
and suppose $J$, $J'$ are two-sided Jacobi matrices such that $J'\in\cc T_\E$ and $J=J'+\de J$ is a compact perturbation of $J'$.
If $\sum_{k=1}^\infty \eps_k<\infty$, then
\begin{align} \lb{InfBand-LT1}
\sum_{\la\in\si(J)\bs\E} \dist(\la,\E)^p \leq L_{p,\,\E} \sum_{n\in\bb Z} |\de a_n|+|\de b_n|
\end{align}
for $1/2<p<1$.
If, in addition, $\sum_{k=1}^\infty \sqrt{\eps_k}\log(1/\eps_k) <\infty$, then
\begin{align} \lb{InfBand-LT2}
\sum_{\la\in\si(J)\bs\E} \dist(\la,\E)^{p} \leq  L_{p,\,\E}\sum_{n\in\bb Z} |\de a_n|^{p+1/2}+|\de b_n|^{p+1/2}
\end{align}
for every $p>1/2$.
In either case, the constant $L_{p,\,\E}$ is independent of $J$ and $J'$ and only depends on $p$ and $\E$.
\end{theorem}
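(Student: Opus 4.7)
The plan is to apply the abstract bounds of Theorems~\ref{LTthm1} and \ref{LTthm2} for Parts~1 and 2 respectively, using Theorem~\ref{InfBandE} to supply the reflectionless Herglotz estimate and Theorem~\ref{ReflEstThm} to convert it into the spectral-measure bound appearing in \eqref{RnCond1}/\eqref{RnCond2}.

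First I would observe that $\sum\eps_k<\infty$ forces $\eps_k\to 0$ and hence $\sup_k\eps_k<1$, so $\E$ is homogeneous by the criterion stated before Theorem~\ref{InfBandE}. Consequently the spectral measures $d\rho_n$ of $J'\in\cc T_\E$ are reflectionless and absolutely continuous (cf.~\cite{PR09,PSZ10}), so Theorem~\ref{ReflEstThm} is available. For $x$ in the outer gap, the integrand $|t-x|^{-1}$ has a definite sign over $\E$, so
\begin{equation*}
\int_\E\f{d\rho_n(t)}{|t-x|}=\bigg|\int_\E\f{d\rho_n(t)}{t-x}\bigg|\leq\f{C}{|x-\al_0|^{1/2}|x-\be_0|^{1/2}}
\end{equation*}
directly from Theorem~\ref{InfBandE}, giving $C_0$. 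For $x\in(\al_k,\be_k)$ with $k\geq1$, combining Theorems~\ref{ReflEstThm} and \ref{InfBandE} yields
\begin{equation*}
\int_\E\f{d\rho_n(t)}{|t-x|}\leq C_k^{\mathrm{Refl}}\cdot\f{C\sqrt{\eps_k}}{\dist(x,\E)^{1/2}}.
\end{equation*}
The crucial geometric step is that the $k$-th gap sits inside the leftmost band of $\E_{k-1}$, whose length is $b_{k-1}$; hence $\be_k-\be_0\leq b_{k-1}$ while $\be_k-\al_k=\eps_k b_{k-1}$, so the first argument of the $\min$ in $C_k^{\mathrm{Refl}}$ is at most $\log(1/\eps_k)$, and the effective constant in \eqref{RnCond1}/\eqref{RnCond2} is $C_k\lesssim\bigl(1+\log(1/\eps_k)\bigr)\sqrt{\eps_k}$.

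The final step is to verify the summability conditions. Since $b_j=b_0\prod_{i=1}^j(1-\eps_i)/2$ and $\prod(1-\eps_i)>0$ when $\sum\eps_i<\infty$, we have $b_j\asymp 2^{-j}$. Condition \eqref{CkCond1} in Theorem~\ref{LTthm1} then reads
\begin{equation*}
\sum_{k\geq1}\bigl(1+\log(1/\eps_k)\bigr)\eps_k^p\cdot 2^{-k(p-1/2)}<\infty,
\end{equation*}
which holds because $\eps_k^p(1+\log(1/\eps_k))$ is bounded on $(0,1)$ and the geometric factor is summable; Theorem~\ref{LTthm1} then delivers \eqref{InfBand-LT1}. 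Condition \eqref{CkCond2} in Theorem~\ref{LTthm2} reduces to $\sum\sqrt{\eps_k}(1+\log(1/\eps_k))<\infty$, which follows from the stronger hypothesis $\sum\sqrt{\eps_k}\log(1/\eps_k)<\infty$ since $\sqrt{\eps_k}\leq\sqrt{\eps_k}\log(1/\eps_k)$ once $\eps_k<1/e$; Theorem~\ref{LTthm2} applied with $p$ replaced by $p+1/2$ then yields \eqref{InfBand-LT2}.

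The main obstacle will be securing the refined estimate $C_k^{\mathrm{Refl}}\leq 9+2\log(1/\eps_k)$. A naive use of $\log(1/g_k)$ would pick up a term $\log(1/b_{k-1})=O(k)$, forcing the much stronger hypothesis $\sum k\sqrt{\eps_k}<\infty$ in Part~2. Exploiting that only the local width $\be_k-\be_0$ (rather than the full diameter of $\E$) enters the $\min$ in Theorem~\ref{ReflEstThm} is what keeps the summability hypotheses in line with the sharp necessary condition identified in Theorem~\ref{InfBandE}.
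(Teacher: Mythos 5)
Your proof is correct and follows essentially the same route as the paper's: combine Theorem~\ref{InfBandE} with Theorem~\ref{ReflEstThm} to produce $C_k\lesssim\sqrt{\eps_k}\log(1/\eps_k)$, then feed this into Theorems~\ref{LTthm1} and \ref{LTthm2}. The paper gets the ratio bound $\frac{\be_k-\be_0}{\be_k-\al_k}\leq 1/\eps_k$ by the algebraic identity $1+\frac{1-\eps_k}{2\eps_k}$, where you argue geometrically that the gap lies in a band of length $b_{k-1}$; these are equivalent, and your version with $(1+\log(1/\eps_k))$ in place of $\log(1/\eps_k)$ is slightly more careful about the regime $\eps_k$ near $1$ (which the paper handles implicitly since $\eps_k\to 0$).
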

\begin{proof}
Recall that every reflectionless measure on $\E$ is absolutely continuous since $\E$ is a homogeneous set.
By construction of $\E$,
\begin{align}
\f{\be_k-\be_0}{\be_k-\al_k} = 1+\f{\al_k-\be_0}{\be_k-\al_k} = 1+\f{1-\eps_k}{2\eps_k} \leq \f{1}{\eps_k}, \quad k\geq 1.
\end{align}
Thus, \eqref{InfBandEst} combined with \eqref{ReflEst1} yields \eqref{RnCond1} and \eqref{RnCond2} for the gap at level $k\geq1$ with a constant
\begin{equation} \lb{InfBandCk}
C_k=C\sqrt{\eps_k}\log(1/\eps_k),
\end{equation}
where $C>0$ is sufficiently large and independent of $k$.
Since
\begin{equation}
\be_k-\al_k \leq 2^{1-k}\eps_k(\al_0-\be_0), \quad k\geq1,
\end{equation}
\eqref{CkCond1} is satisfies due to the exponential decay of $(\be_k-\al_k)^{p-1/2}$. Moreover, \eqref{CkCond2} holds by assumption.
Thus, \eqref{InfBand-LT1} and \eqref{InfBand-LT2} follow from Theorems \ref{LTthm1} and \ref{LTthm2}, respectively.
\end{proof}

In addition to Theorem \ref{InfBandLT}, we have the following result in which the distance to the essential spectrum is measured by the potential
theoretic Green function $g$ instead of the usual distance function. The proof relies on the well-known relation between the Green function and
the equilibrium measure for $\E$, denoted $d\mu_\E$,
\begin{equation} \lb{g-dmuE}
g(z)=\ga(\E)-\int \log{|z-t|}^{-1} d\mu_\E(t), \quad z\in\bb C\bsn\E,
\end{equation}
where $\ga(\E)=-\log\bigl(\ca(\E)\bigr)$ is the so-called Robin constant for $\E$.

\begin{theorem} \lb{InfBandGLT}
Let $\E$ be the infinite band set constructed in \eqref{InfBandSetE} and suppose
$J$, $J'$ are two-sided Jacobi matrices such that $J'\in\cc T_\E$ and $J=J'+\de J$ is a compact perturbation of $J'$.
If $\sum_{k=1}^\infty \eps_k<\infty$, then for every $p>1$,
\begin{align} \lb{InfBand-LT3}
\sum_{\la\in\si(J)\bs\E} g(\la)^{p} \leq  L_{p,\,\E}\sum_{n\in\bb Z} |\de a_n|^{(p+1)/2}+|\de b_n|^{(p+1)/2},
\end{align}
where the constant $L_{p,\,\E}$ is independent of $J$, $J'$ and only depends on $p$ and $\E$.
\end{theorem}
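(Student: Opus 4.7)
The plan is to reduce the Green-function-weighted sum in \eqref{InfBand-LT3} to the distance-weighted Lieb--Thirring estimate produced by Theorem~\ref{LTthm2}, by establishing the sharpened comparison
\begin{align*}
g(x) \leq C_\E\sqrt{(\be_k-\al_k)\,\dist(x,\E)}, \quad x\in(\al_k,\be_k),\ k\geq1,
\end{align*}
together with the analogous bound $g(x)\leq C_\E\sqrt{(\al_0-\be_0)\,\dist(x,\E)}$ in the outer gap (valid for $x$ at bounded distance from $\E$, and in particular for all discrete eigenvalues of the bounded operator $J$). Raising to the $p$th power, the left-hand side of \eqref{InfBand-LT3} splits as
\begin{align*}
\sum_{\la\in\si(J)\bs\E}g(\la)^p \leq C_\E^p\sum_{k\geq0}(\be_k-\al_k)^{p/2}\sum_{\la\in\si(J)\cap(\al_k,\be_k)}\dist(\la,\E)^{p/2}.
\end{align*}
For each gap, the inner sum is then controlled by the gap-by-gap estimate that underlies the proof of Theorem~\ref{LTthm2}, applied with Schatten exponent $q=(p+1)/2>1$, so that the LHS exponent $q-1/2$ equals $p/2$. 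Combining Theorem~\ref{InfBandE} with Theorem~\ref{ReflEstThm} gives the $m$-function constant in gap $k$ as $\widetilde C_k\leq C'\sqrt{\eps_k}\log(1/\eps_k)$, a quantity bounded uniformly on $(0,1)$. Since by construction $\be_k-\al_k\leq 2^{1-k}(\al_0-\be_0)$ decays geometrically, the weighted series $\sum_k(\be_k-\al_k)^{p/2}\widetilde C_k$ converges unconditionally; the hypothesis $\sum\eps_k<\infty$ is needed only to invoke Theorem~\ref{InfBandE} in the first place.

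For the Green-function comparison, I would use potential theory. Since $g(\al_k)=0$ by regularity of $\E$, formula \eqref{g-dmuE} rewrites as
\begin{align*}
g(x) = \int_\E\log\Bigl(\f{|t-x|}{|t-\al_k|}\Bigr)d\mu_\E(t),\quad x\in(\al_k,\be_k).
\end{align*}
The equilibrium density $d\mu_\E/dt$ exhibits a square-root singularity at $\al_k$ whose coefficient is of order $\sqrt{\be_k-\al_k}$; this may be read off from the explicit product form of $d\mu_\E$, which for the homogeneous infinite band set $\E$ is a reflectionless measure of ``constant-type'' structure described by a variant of \eqref{m fct}. Substituting $s=\al_k-t$ near $\al_k$ and then scaling $s=|x-\al_k|v$ yields a near-endpoint contribution of the form $c_k\sqrt{|x-\al_k|}$ times a universal convergent integral, with $c_k\lesssim\sqrt{\be_k-\al_k}$. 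The contribution from $t$ bounded away from $\al_k$ is of no larger order, completing the comparison.

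The main obstacle is verifying the Green-function comparison with a constant uniform in $k$: one needs sharp control on the density of $d\mu_\E$ near each gap edge, which seems to require a careful potential-theoretic analysis exploiting the homogeneity of $\E$. Once this estimate is in hand, the rest of the argument is a direct adaptation of the proof of Theorem~\ref{LTthm2} with the additional gap-dependent weight $(\be_k-\al_k)^{p/2}$. The key observation is that this extra $w_k^{p/2}$ factor in the $k$-th gap is exactly what allows the series to converge under the weaker assumption $\sum\eps_k<\infty$, rather than under the stronger hypothesis $\sum\sqrt{\eps_k}\log(1/\eps_k)<\infty$ required for \eqref{InfBand-LT2}.
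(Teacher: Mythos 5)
Your proposed Green-function comparison
\begin{equation*}
g(x)\leq C_\E\sqrt{(\be_k-\al_k)\,\dist(x,\E)},\quad x\in(\al_k,\be_k),
\end{equation*}
with $C_\E$ uniform in $k$, is false for the infinite band set, and this breaks the argument as written. The correct bound (the one the paper derives) is $g(x)\leq C\sqrt{\eps_k}\,\dist(x,\E)^{1/2}$, and since $\be_k-\al_k=g_k\approx 2^{-k}\eps_k(\al_0-\be_0)$, the factor $\sqrt{\eps_k}$ exceeds your $\sqrt{\be_k-\al_k}$ by roughly $2^{k/2}$. The source of the discrepancy is exactly the spot you flag as the ``main obstacle'': the coefficient $c_k$ of the square-root singularity of $d\mu_\E/dt$ at the gap edge $\al_k$. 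Reading it off the product representation of $m_{\mu_\E}$ of the form \eqref{m fct}, one sees that $c_k$ is not just a local quantity of order $\sqrt{g_k}$; it carries an extra factor $|\al_k-\be_0|^{-1/2}\approx 2^{k/2}$ coming from the accumulation of bands at $\be_0$, so in fact $c_k\sim\sqrt{g_k}\cdot 2^{k/2}\sim\sqrt{\eps_k}$. Your ``local scaling'' analysis of the density near $\al_k$ misses this global contribution, and there is no way to rescue the claimed bound with a uniform constant.

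The paper sidesteps this subtlety entirely. It writes $2\partial g(z)=\int_\E d\mu_\E(t)/(z-t)$ via \eqref{g-dmuE}, observes that $d\mu_\E$ is a reflectionless probability measure on $\E$, and then applies Theorem~\ref{InfBandE} directly to get $|\partial g(x)|\leq C\sqrt{\eps_k}/\dist(x,\E)^{1/2}$; integrating from the gap edge gives $g(x)\leq C\sqrt{\eps_k}\,\dist(x,\E)^{1/2}$ with no need to analyze the equilibrium density pointwise. This is both simpler and correct.

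Once the Green-function bound is corrected, the remainder of your scheme does go through and matches the paper's: the gap-by-gap estimate underlying Theorem~\ref{LTthm2} with Schatten exponent $q=(p+1)/2$ gives
\begin{equation*}
\sum_{\la\in\si(J)\cap(\al_k,\be_k)}\dist(\la,\E)^{p/2}\leq C_k\sum_{n\in\bb Z}|\de a_n|^{(p+1)/2}+|\de b_n|^{(p+1)/2},
\quad C_k\lesssim\sqrt{\eps_k}\log(1/\eps_k),
\end{equation*}
so multiplying by $\eps_k^{p/2}$ (not $(\be_k-\al_k)^{p/2}$) yields a $k$-th term $\lesssim\eps_k^{(p+1)/2}\log(1/\eps_k)$. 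For $p>1$ the sequence $\eps_k^{(p-1)/2}\log(1/\eps_k)$ is bounded (since $\eps_k\to0$), so the series is dominated by $\sum_k\eps_k<\infty$. In particular the ``key observation'' at the end of your write-up is not needed: you get convergence without the extra geometric gain from $(\be_k-\al_k)^{p/2}$, so the false sharpening was both incorrect and unnecessary.
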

\begin{proof}
Let $\pd=\f12(\f{\pd}{\pd x}-i\f{\pd}{\pd y})$, then for any analytic function $f(z)$ we have $2\pd\big(\Re f(z)\big)=f'(z)$ by the Cauchy--Riemann equations. Combining this observation with \eqref{g-dmuE} yields
\begin{equation} \lb{dg-dmuE}
2\pd g(z)=\int_\E \f{d\mu_\E(t)}{z-t}, \quad z\in\bb C\bsn\E.
\end{equation}

For convenience, we define $\eps_0=1/e$. Then since the equilibrium measure $d\mu_\E$ is reflectionless on $\E$, it follows from \eqref{InfBandEst} that
\begin{align}
|\pd g(x)| \leq \f{C\sqrt{\eps_k}}{\dist(x,\E)^{1/2}}, \quad x\in(\al_k, \be_k), \quad k\geq0.
\end{align}
Recalling that the Green function vanishes on $\E$, integration over the gaps then gives
\begin{align}
g(x) \leq C\sqrt{\eps_k}\,\dist(x,\E)^{1/2}, \quad x\in(\al_k, \be_k), \quad k\geq0.
\end{align}
As in the proofs of Theorems \ref{LTthm2} and \ref{InfBandLT}, we hence get
\begin{align}
\sum_{\la\in\si(J)\cap(\al_k,\be_k)} \dist(\la,\E)^{p/2} \leq  C_k\sum_{n\in\bb Z} |\de a_n|^{(p+1)/2}+|\de b_n|^{(p+1)/2}, 
\end{align}
where $C_k=C\sqrt{\eps_k}\log(1/\eps_k)$. Thus, for each $k\geq0$,
\begin{align}
\sum_{\la\in\si(J)\cap(\al_k,\be_k)} g(\la)^p \leq  C\eps_k^{(p+1)/2}\log(1/\eps_k)\sum_{n\in\bb Z} |\de a_n|^{(p+1)/2}+|\de b_n|^{(p+1)/2},
\end{align}
and since $\eps_k^{(p-1)/2}\log(1/\eps_k)$ is a bounded sequence, summing over $k$ yields \eqref{InfBand-LT3}.
\end{proof}

\begin{remark}
It is an interesting open question if one can extend Theorems \ref{InfBandLT} and \ref{InfBandGLT} to also cover the endpoint results $p=1/2$, respectively, $p=1$.
In this regard, we point out that $\sum_{k=1}^\infty \eps_k<\infty$ is a necessary condition.
Indeed, let $J'\in\cc T_\E$ be such that the spectral measure $d\rho$ of $(J',\de_0)$ has the form \eqref{InfBand-rp} with $\ga_j=\be_j$ for all $j\geq1$, equivalently,
\begin{align}
\bigg|\int_\E\f{d\rho(t)}{t-\la}\bigg| = \sup_{d\mu\in\cc R_\E}\bigg|\int_\E\f{d\mu(t)}{t-\la}\bigg|,\quad \la<\be_0,
\end{align}
and consider the rank one perturbation $J=J'+\de b_0\langle\de_0,\,\cdot\,\rangle\de_0$. Then, as in \eqref{rank one},
$\la\in\bb R\bsn\E$ is an eigenvalue of $J$ if and only if
\begin{equation}
\int_\E\f{d\rho(t)}{t-\la}=\langle\de_0,(J'-\la)^{-1}\de_0\rangle = -\f{1}{\de b_0}.
\end{equation}
Assume that $\de b_0<0$ and denote by $\la_0$ the eigenvalue of $J$ below $\be_0=\inf\E$. It is known (cf. \cite{To06}) that the Green function satisfies
\begin{equation}
g(x)\geq c|\be_0-x|^{1/2}
\end{equation}
for some $c>0$ and all $x<\be_0$ sufficiently close to $\be_0$. Hence, it follows from \eqref{InfBand-LT2} with $p=1/2$, respectively, \eqref{InfBand-LT3} with $p=1$
that $|\la_0-\be_0|^{1/2}\leq C|\de b_0|$ for some constant $C<\infty$ and all $\de b_0<0$ sufficiently close to zero. Thus,
\begin{align}
\limsup_{\la\nearrow\be_0}|\la-\be_0|^{1/2}\bigg| \int_\E\f{d\rho(t)}{t-\la}\bigg| = \limsup_{\de b_0\nearrow0}\f{|\la_0-\be_0|^{1/2}}{|\de b_0|} \leq C < \infty,
\end{align}
and hence, $\sum_{k=1}^\infty \eps_k<\infty$ follows from the converse direction of Theorem~\ref{InfBandE}.

The above considerations also lead to the new insight that there are
several homogeneous sets for which the endpoint Lieb--Thirring bounds (i.e., \eqref{InfBand-LT2} with $p=1/2$, respectively, \eqref{InfBand-LT3} with $p=1$) cannot hold.
For example, every infinite band set of the form \eqref{InfBandSetE} with
\begin{equation}
\sup_{k\geq1}\eps_k<1 \; \mbox{ and } \; \sum_{k=1}^\infty \eps_k = \infty.
\end{equation}
Moreover, we see that the endpoint results do not even need to hold for homogeneous sets with optimally smooth Green function (i.e., H\"older continuous of order $1/2$).
Indeed, in our setting a result of Totik \cite[Corollary 3.3]{To06} implies that $g\in\lip(1/2)$ precisely when $\sum_{k=1}^\infty\eps_k^2<\infty$.
So the infinite band set $\E$ with $\eps_k=1/(k+1)$ is homogeneous and the Green function for $\overline{\bb C}\bsn\E$ is optimally smooth. 
Yet, the endpoint Lieb--Thirring bounds do not hold for perturbations of some element in $\cc T_\E$.
\end{remark}

\subsection{$\mathbold{\eps}$-Cantor Set Example}
\label{sec4b}

In this subsection, we consider fat Cantor sets (i.e., those of positive Lebesgue measure). Suppose $\{\eps_k\}_{k=1}^\infty\subset(0,1)$ and
let
\begin{equation} \lb{CantorSetE}
\E=\bigcap_{k=0}^\infty\E_k
\end{equation}
be the middle $\mathbold{\eps}$-Cantor set, that is, $\E_0=[\be_0,\al_0]$ and $\E_{k}$ is obtained from $\E_{k-1}$
by removing the middle $\eps_{k}$ portion from each of the $2^{k-1}$ bands in $\E_{k-1}$. It is known (cf.\ \cite[p.~125]{PY03}) that $\E$ is a homogeneous set
(in particular, $\E$ is of positive measure) if and only if $\sum_{k=1}^\infty \eps_k<\infty$.

Our first main result is
\begin{theorem} \lb{CantorE}
Suppose $\E$ is the middle $\mathbold{\eps}$-Cantor set constructed in \eqref{CantorSetE}. If $\sum_{k=1}^\infty k\eps_k<\infty$,
then for some constant $C>0$,
\begin{align}
 \label{gap}
\sup_{d\rho\in\cc R_\E}\bigg|\int_\E\f{d\rho(t)}{t-x}\bigg| \leq
\begin{cases}
\ds{\f{C}{|x-\al_0|^{1/2}|x-\be_0|^{1/2}}, \quad x\in\bb R\bsn\E_0,} \\[5mm]
\ds{\qquad \f{C\sqrt{\eps_k}}{\dist(x,\E)^{1/2}}, \quad x\in\E_{k-1}\bsn\E_k,\quad k\geq1.}
\end{cases}
\end{align}
Conversely, if
\begin{align}
\lb{outer sqrt left}
\limsup_{x\nearrow\be_0}\, |x-\be_0|^{1/2}\!\sup_{d\rho\in\cc R_\E}\bigg|\int_\E\f{d\rho(t)}{t-x}\bigg| < \infty,
\end{align}
then $\sum_{k=1}^\infty k\eps_k<\infty$.
\end{theorem}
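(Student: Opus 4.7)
The plan is to mirror the proof of Theorem \ref{InfBandE}, with modifications to handle the binary branching of the Cantor construction; this branching is what strengthens the summability condition from $\sum\eps_k<\infty$ (infinite band case) to $\sum k\eps_k<\infty$.

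\textbf{Forward direction.} Assume $\sum_{k\geq1} k\eps_k<\infty$ and fix a level-$k$ gap $(\al_k,\be_k)$, a point $x$ in it, and a reflectionless probability measure $d\rho$ on $\E$. With $g_j=\be_j-\al_j$, the product formula \eqref{m fct} gives
\begin{align*}
\bigg|\int_\E\f{d\rho(t)}{t-x}\bigg| = \f{1}{\sqrt{|x-\be_0||x-\al_0|}}\cdot\f{|x-\ga_k|}{\sqrt{|x-\al_k||x-\be_k|}}\cdot P(x),
\end{align*}
with $P(x)=\prod_{j\neq k}|x-\ga_j|/\sqrt{|x-\al_j||x-\be_j|}$ and $\ga_j\in[\al_j,\be_j]$. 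Since $|x-\al_k|+|x-\be_k|=g_k$ and $\dist(x,\E)\leq g_k/2$, the distinguished factor is at most $\sqrt{2g_k/\dist(x,\E)}$; combined with the bound $|x-\be_0||x-\al_0|\geq c_\E\,b_k b_0$ (a level-$k$ gap sits at distance $\geq b_k$ from each endpoint of $\E_0$) and $g_k=\eps_k b_{k-1}\asymp\eps_k b_k$, this produces the target factor $C\sqrt{\eps_k}/\sqrt{\dist(x,\E)}$ once $P(x)$ is shown to be uniformly bounded in $k$ and $x$.

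\textbf{Combinatorial estimate for $P(x)$.} Using $|x-\ga_j|\leq\max(|x-\al_j|,|x-\be_j|)$ and $\log(1+u)\leq u$,
\begin{align*}
P(x)\leq\exp\bigg(\f12\sum_{j\neq k}\f{g_j}{d(x,G_j)}\bigg),\quad d(x,G_j)=\min(|x-\al_j|,|x-\be_j|).
\end{align*}
I label each level-$l$ gap by a binary string $s\in\{0,1\}^{l-1}$ describing its location in the Cantor tree, so that $G_r$ has $|r|=k-1$, and use the explicit position $\al_l^{(s)}-\be_0=b_l+\sum_i s_i(b_{i-1}-b_i)$ inside each branch. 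For any other level-$l$ gap $G_s$ with $m=\mathrm{lcp}(r,s)$, the gaps $G_r$ and $G_s$ lie in distinct level-$(m{+}1)$ subbands of their common level-$m$ ancestor band, separated by a gap of length $g_{m+1}$, and the scaled Cantor structure inside each subband gives a lower bound $d(x,G_s)\geq g_{m+1}+(2i{-}1)b_l$ for a suitable position index $i\in\{1,\ldots,2^{l-m-2}\}$. A dyadic splitting of the resulting sum (regime $(2i{-}1)b_l\lesssim g_{m+1}$ versus its complement) together with $g_l/b_l\asymp\eps_l$ estimates the contribution of the $2^{l-m-2}$ cousins at branching level $m$ by $C\eps_l\cdot\min(l-m,\log(1/\eps_{m+1}))$. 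Summing over $m\leq\min(l,k)-1$ and over $l\neq k$ yields a bound proportional to $\sum_{l\geq1}l\,\eps_l$, finite by hypothesis, so $P(x)\leq C$ uniformly and \eqref{gap} follows on the interior gaps. For $x\in\bb R\bs\E_0$ the same computation applies with no distinguished factor to extract.

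\textbf{Converse direction.} Take $d\rho\in\cc R_\E$ with $\ga_j=\be_j$ for all $j$, so for $x<\be_0$
\begin{align*}
\bigg|\int_\E\f{d\rho(t)}{t-x}\bigg|=\f{1}{\sqrt{(\be_0-x)(\al_0-x)}}\prod_{j\geq1}\sqrt{1+\f{g_j}{\al_j-x}}.
\end{align*}
Letting $x\nearrow\be_0$ and applying \eqref{outer sqrt left} with $\log(1+u)\geq u/2$ forces $\sum_{j\geq1}g_j/(\al_j-\be_0)<\infty$. Grouping level-$l$ gaps by the smallest index $i^*$ with $s_{i^*}=1$, the position formula yields $\al_l^{(s)}-\be_0\asymp 2^{-i^*}b_0$ and there are $2^{l-1-i^*}$ such gaps for each $i^*\in\{1,\ldots,l-1\}$, plus one with $s=0\ldots0$ giving $\al_l-\be_0=b_l$. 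Hence
\begin{align*}
g_l\sum_s\f{1}{\al_l^{(s)}-\be_0}\asymp g_l\bigg(\f{1}{b_l}+\sum_{i^*=1}^{l-1}\f{2^{l-1-i^*}\cdot 2^{i^*}}{b_0}\bigg)\asymp\f{g_l\,l}{b_l}\asymp l\,\eps_l,
\end{align*}
and summing over $l$ forces $\sum_l l\eps_l<\infty$.

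\textbf{Main obstacle.} The hardest step is the combinatorial estimate uniformly bounding $\sum_{j\neq k}g_j/d(x,G_j)$. The binary branching produces exponentially many level-$l$ gaps, and for $x$ close to the boundary of $G_r$ the deep descendants (levels $l>k$) can approach $x$, so the dyadic splitting must balance the geometric count against the harmonic inverse-distance tail at every branching level. The factor of $l$ that appears in both directions—arising from the $l$ possible branching levels $i^*$ at which the path diverges from $G_r$'s subtree—is what forces the condition $\sum l\eps_l<\infty$ instead of the weaker $\sum\eps_k<\infty$ of the infinite band case.
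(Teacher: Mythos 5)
Your converse direction is sound and is essentially the paper's own argument: choosing $\ga_j=\be_j$, reducing boundedness of the product to $\sum_j (\be_j-\al_j)/(\al_j-\be_0)<\infty$, and counting the level-$l$ gaps according to their distance from $\be_0$ to get a contribution $\asymp l\,\eps_l$ per level. The forward direction, however, has a genuine gap: the key claim that $P(x)=\prod_{j\ne j_k}|x-\ga_j|/\sqrt{|x-\al_j||x-\be_j|}$ is uniformly bounded is false. The culprit is the \emph{ancestor} gaps, i.e.\ the gaps bordering the nested bands $B_1\supset B_2\supset\cdots\supset B_{k-1}$ that contain $x$; these are not cousins, and your lower bound $d(x,G_s)\ge g_{m+1}+(2i-1)b_l$ simply does not apply to the separating gap itself. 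Concretely, let the chain turn at every level toward the level-$1$ gap, so that $x$ lies in a level-$k$ gap at distance $\approx b_k$ from the level-$1$ gap of length $g_1=\eps_1(\al_0-\be_0)$. Taking $\ga_1$ at the far endpoint of that gap (an admissible choice, realized by a reflectionless measure via \eqref{m fct}), the single factor for $j=1$ is $\approx\sqrt{1+g_1/b_k}\asymp\sqrt{\eps_1\,2^{k}}$, which blows up with $k$. Moreover, in this configuration $|x-\be_0|$ and $|x-\al_0|$ are both comparable to $\al_0-\be_0$, so your outer estimate $|x-\be_0||x-\al_0|\ge c_\E\,b_kb_0$, while true, is uselessly lossy here: the required growth $1/\sqrt{b_k}$ actually lives inside $P(x)$, not in the outer factor, and your split (outer factor gives $1/\sqrt{b_k}$, rest bounded) cannot be repaired as stated.

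This is exactly the role of the paper's Lemma \ref{lem2}: the finite product over the ancestor gaps, taken \emph{together} with $1/\sqrt{|x-\be_0||x-\al_0|}$, is bounded by $C/\sqrt{b_k}$ through a telescoping rearrangement pairing the far endpoint $\ti\ga_{j_i}$ of each ancestor gap with the far endpoint of the band $B_{\si(i)}$, so that every ratio is $1+O(\eps_i)$ and a single uncancelled distance $\ge b_k$ remains; the remaining gaps (your cousins and the descendants inside $B_{k-1}$) are then controlled as in Lemmas \ref{lem1a}--\ref{lem1b}, which is where $\sum_k k\eps_k<\infty$ enters. A secondary issue: even your cousin bookkeeping is incomplete, since passing from $\sum_{m}\min\bigl(l-m,\log(1/\eps_{m+1})\bigr)$ summed against $\eps_l$ to a bound ``proportional to $\sum_l l\eps_l$'' needs the inner sum to be $O(l)$, which can fail (e.g.\ when the early $\eps_m$ are extremely small); the paper's Lemma \ref{lem1b} avoids this by bounding the dyadic sum $\sum_m\sum_n(1+m2^n)^{-1}$ directly by $j-1$.
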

\begin{remark}
By symmetry, the condition in \eqref{outer sqrt left} is equivalent to
\begin{align}
\lb{outer sqrt right}
\limsup_{x\searrow\,\al_0}\, |x-\al_0|^{1/2}\!\sup_{d\rho\in\cc R_\E}\bigg|\int_\E\f{d\rho(t)}{t-x}\bigg| < \infty.
\end{align}
\end{remark}
\begin{proof}
Assume that $\sum_{k=1}^\infty k\eps_k<\infty$.
Since the first inequality in \eqref{gap} follows directly from Lemma \ref{lem1a} below (with $i=0$ and $m=0$), we merely focus on establishing the estimate
for the inner gaps.
As for notation, denote by $(\al_j,\be_j)$, $j\geq 0$, the gaps of $\E$ and let $\ga_j$ be an arbitrary point in $[\al_j,\be_j]$ for $j\geq 1$.
Moreover, let
\begin{equation}
b_k=\f{(1-\eps_1)\cdots(1-\eps_k)(\al_0-\be_0)}{2^k}, \quad k\geq0,
\end{equation}
and
\begin{equation}
g_k=\f{\eps_k(1-\eps_1)\cdots(1-\eps_{k-1})(\al_0-\be_0)}{2^{k-1}}, \quad k\geq1,
\end{equation}
be the band and gap lengths at level $k$.
Fix a gap, say $(\al_{j_k},\be_{j_k})$, at level $k\geq 1$ (i.e., an interval in $\E_{k-1}\bsn\E_k$).
We claim that it suffices to show that
\begin{equation}
\f{1}{\sqrt{|x-\be_0| |x-\al_0|}}\prod_{j\neq j_k}\f{|x-\ga_j|}{\sqrt{|x-\al_j| |x-\be_j|}}
\leq \f{C}{\sqrt{b_k}}
\end{equation}
when $x\in(\al_{j_k},\be_{j_k})$. For it readily follows that
\begin{equation}
\f{|x-\ga_{j_k}|}{\sqrt{|x-\al_{j_k}| |x-\be_{j_k}|}}\leq
\f{\sqrt{g_k}}{\dist(x,\E)^{1/2}}
\end{equation}
and
\begin{equation}
\label{gap band ratio}
\f{g_k}{b_k}=\f{2\eps_k}{1-\eps_k}.
\end{equation}

Suppose that $x\in(\al_{j_k},\be_{j_k})$ and set $B_0=\E_0$.
If $k>1$, then $x$ belongs to precisely one of the two bands in $\E_1$. Denote this band by $B_1$.
Similarly, if $k>2$, denote by $B_2$ the unique band in $\E_2\cap B_1$ which contains $x$. We may continue in this way to
obtain a finite sequence of bands
\begin{equation}
B_0\supset B_1\supset B_2\supset\ldots\supset B_{k-1},
\end{equation}
each of which contains $x$. As for further notation, let $(\al_{j_i},\be_{j_i})$ denote the gap in $\E_i\cap B_{i-1}$ for $i=1,\ldots,k-1$.
Note that $(\al_{j_k},\be_{j_k})$ precisely matches the gap in $\E_k\cap B_{k-1}$. A possible scenario when $k=4$ is illustrated below.

\begin{center}
\begin{tikzpicture}
\draw [very thick] (-5,0) node[align=center, above=1pt] {$\be_0$} -- (3.4,0);
\draw [very thick] (4.5,0) -- (5,0) node[align=center, above=1pt] {$\al_0$};
\draw [dashed, very thick] (3.4,0) -- (4.5,0);
\draw [very thick] (-5,-0.1) -- (-5,0.1);
\draw [very thick] (5,-0.1) -- (5,0.1);
\draw [very thick](-2.58,0) circle [radius=0.04] node[align=center, above=2.5pt] {\footnotesize $x$};
\draw [gray,decorate,decoration={brace,amplitude=10pt,mirror},yshift=-27pt] (-5,0)  -- (1.6,0)
   node [black,midway,below=7.5pt,xshift=1pt] {\scriptsize $B_1$};
\draw [thin] (1.6,-0.1) -- (1.6,0.1) node[align=center, above=-1.3pt, xshift=1pt] {\small $\al_{j_1}$};
\draw [thin] (2.8,-0.1) -- (2.8,0.1) node[align=center, above=-1.3pt, xshift=1pt] {\small $\be_{j_1}$};
\draw [gray,decorate,decoration={brace,amplitude=7pt,mirror},yshift=-15pt] (-5,0)  -- (-2,0)
   node [black,midway,below=4.5pt,xshift=1pt] {\scriptsize $B_2$};
\draw [thin] (-2,-0.09) -- (-2,0.09) node[align=center, above=-2.3pt, xshift=1pt] {\footnotesize $\al_{j_2}$};
\draw [thin] (-1.4,-0.09) -- (-1.4,0.09) node[align=center, above=-2.3pt, xshift=2pt] {\footnotesize $\be_{j_2}$};
\draw [gray,decorate,decoration={brace,amplitude=5pt,mirror},yshift=-4pt] (-3.25,0)  -- (-2,0)
   node [black,midway,below=2pt,xshift=1pt] {\scriptsize $B_3$};
\draw [thin] (-3.75,-0.08) -- (-3.75,0.08) node[align=center, above=-2pt] {\scriptsize $\al_{j_3}$};
\draw [thin] (-3.25,-0.08) -- (-3.25,0.08) node[align=center, above=-2pt, xshift=1pt] {\scriptsize $\be_{j_3}$};
\draw [thin] (-2.775,-0.07) -- (-2.775,0.07); 
\draw [thin] (-2.475,-0.07) -- (-2.475,0.07); 
\end{tikzpicture}
\end{center}
We observe that $B_i$ and $B_{i+1}$ always have precisely one endpoint in common.

Our estimation now splits into three parts. We start by estimating the product corresponding to all the gaps of $\E$ which are contained in
$(\E_i\cap B_{i-1})\bsn B_i$ for $i=1,\ldots,k-1$. As follows from Lemma \ref{lem1b}, this infinite product is bounded as long as $\sum_{k=1}^\infty k\eps_k<\infty$.
Then we estimate the finite product corresponding to the endpoints $\al_0$, $\be_0$ and the gaps $(\al_{j_i},\be_{j_i})$ for $i=1,\ldots,k-1$.
This product is bounded by some constant divided by $\sqrt{b_k}$, see Lemma \ref{lem2} below.
The final step is to estimate the product corresponding to the gaps in $B_{k-1}\bsn(\al_{j_k},\be_{j_k})$. But this can be done as in
Lemma \ref{lem1a} (with $i=k$ and $m=0$).

For the converse direction, we mimic the proof of Theorem \ref{InfBandE} and take $d\rho$ to be the reflectionless measure on $\E$
which corresponds to $\ga_j=\be_j$ for all $j\geq 1$.
It then suffices to show that
\begin{equation}
\label{prod to sum}
\prod_{j=1}^\infty \frac{\be_j-\be_0}{\al_j-\be_0}<\infty
\;\; \Longrightarrow \;\; \sum_{k=1}^\infty k\eps_k<\infty.
\end{equation}
Convergence of the above product implies that the factors are bounded. Hence,
\begin{equation}
\frac{\be_j-\be_0}{\al_j-\be_0}=1+\frac{\be_j-\al_j}{\al_j-\be_0}\geq \exp\biggl\{\frac{1}{d}\frac{\be_j-\al_j}{\al_j-\be_0}\biggr\}
\end{equation}
for some constant $d>0$ and all $j\geq 1$. Our aim is thus to show that
\begin{equation}
\label{sum j}
 \sum_{j=1}^\infty\frac{\be_j-\al_j}{\al_j-\be_0}\geq c\sum_{k=1}^\infty k\eps_{k}
\end{equation}
for some constant $c>0$. This will immediately imply \eqref{prod to sum}.
For the sake of clarity, we shall refer to the following figure.
\begin{center}
\begin{tikzpicture}
\draw [very thick] (-5,0) node[align=center, above=1pt] {$\be_0$} -- (5,0) node[align=center, above=1pt] {$\al_0$};
\draw [very thick] (-5,-0.1) -- (-5,0.1);
\draw [very thick] (5,-0.1) -- (5,0.1);
\draw [gray,decorate,decoration={brace,amplitude=8pt,mirror},yshift=-5pt] (1,0)  -- (5,0)
   node [black,midway,below=6pt,xshift=1pt] {\scriptsize $D_1$};
\draw [thin] (-1,-0.1) -- (-1,0.1) node[align=center, above=-1.3pt] {\small $\al_{j_1}$};
\draw [thin] (1,-0.1) -- (1,0.1) node[align=center, above=-1.3pt] {\small $\be_{j_1}$};
\draw [gray,decorate,decoration={brace,amplitude=5.5pt,mirror},yshift=-5pt] (-2.6,0)  -- (-1,0)
   node [black,midway,below=3.5pt,xshift=1pt] {\scriptsize $D_2$};
\draw [thin] (-3.4,-0.09) -- (-3.4,0.09) node[align=center, above=-1.8pt] {\small $\al_{j_2}$};
\draw [thin] (-2.6,-0.09) -- (-2.6,0.09) node[align=center, above=-1.8pt] {\small $\be_{j_2}$};
\draw [gray,decorate,decoration={brace,amplitude=3pt,mirror},yshift=-5pt] (-4.1,0)  -- (-3.4,0)
   node [black,midway,below=1pt,xshift=-7pt] {\scriptsize $\cdots$ $D_3$};
\draw [thin] (-4.4,-0.08) -- (-4.4,0.08); 
\draw [thin] (-4.1,-0.08) -- (-4.1,0.08); 
\end{tikzpicture}
\end{center}
The idea is to estimate the terms from all the gaps in $D_1$, all the gaps in $D_2$, etc.,
as well as the term from the gap between $D_1$ and $D_2$, the gap between $D_2$ and $D_3$, etc.
Start by noting that
\begin{align}
 \sum_{j:\,(\al_j, \be_j)\subset D_n}\frac{\be_j-\al_j}{\al_j-\be_0} &
  \geq \frac{1}{b_{n-1}} \Bigl(g_{n+1}+2g_{n+2}+\ldots+2^{k-1}g_{n+k}+\ldots\Bigr) \no \\
 & = \frac{(1-\eps_n)\eps_{n+1}}{2}+\frac{(1-\eps_n)(1-\eps_{n+1})\eps_{n+2}}{2}+\ldots \no \\
 & \geq \frac{1}{2}\,\prod_{i=n}^\infty (1-\eps_i)\sum_{k=n+1}^\infty \eps_k
\end{align}
for every $n\geq 1$. If $(\al_{j_n},\be_{j_n})$ denotes the gap between $D_n$ and $D_{n+1}$, it follows from \eqref{gap band ratio} that
\begin{equation}
 \sum_{n=1}^\infty\frac{\be_{j_n}-\al_{j_n}}{\al_{j_n}-\be_0}=
 \sum_{k=1}^\infty \frac{g_k}{b_k}\geq 2\sum_{k=1}^\infty \eps_k.
\end{equation}
Hence, we obtain \eqref{sum j} with $2c=\prod_{i=1}^\infty (1-\eps_i)>0$. This completes the proof.
\end{proof}
We now formulate and prove the three technical lemmas that are needed in the proof of Theorem \ref{CantorE}.
\begin{lemma} \lb{lem1a}
Suppose $\sum_{j=1}^\infty j\eps_j<\infty$ and consider the infinite products
\begin{equation}
 R_i(x)=\prod_{j:\,(\al_j,\be_j)\subset A_i}\f{|x-\ga_j|}{\sqrt{|x-\al_j| |x-\be_j|}},
 \quad i=0,1,\ldots,k,
\end{equation}
where $A_i$ is a band in $\E_i$. When $\dist(x, A_i)\geq mb_i$, we have
\begin{equation} \lb{Ri}
 R_i(x)\leq\exp\biggl\{\frac{1}{c}\sum_{j=i+1}^\infty\Bigl(\sum_{n=1}^{j-i}\f{1}{1+m2^n}\Bigr)\eps_{j}\biggr\},
\end{equation}
where $c=\prod_{j=1}^\infty(1-\eps_j)$.
\end{lemma}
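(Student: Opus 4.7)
The plan is to take logarithms of $R_i(x)$, linearise via an elementary inequality, and then organise the resulting sum over gaps by a combinatorial ``depth index'' attached to their position in the tree of sub-bands of $A_i$.

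Since $\ga_j\in[\al_j,\be_j]$ we have $|x-\ga_j|\leq\max\bigl(|x-\al_j|,|x-\be_j|\bigr)$, so writing $g_j=\be_j-\al_j$ and $M_j=\min\bigl(|x-\al_j|,|x-\be_j|\bigr)$, each factor in $R_i(x)$ satisfies $|x-\ga_j|^2/(|x-\al_j||x-\be_j|)\leq 1+g_j/M_j$. Combining this with $\log(1+t)\leq t$ gives
\[
\log R_i(x)\,\leq\,\tfrac{1}{2}\sum_{j:\,(\al_j,\be_j)\subset A_i}\f{g_j}{M_j},
\]
so the remaining task is to control this linear sum.

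By left--right symmetry I assume $x$ lies to the left of $A_i$ with $d:=\dist(x,A_i)\geq m b_i$. The $2^{j-1-i}$ gaps at level $j$ inside $A_i$ are in bijection with paths $\sigma\in\{L,R\}^{j-1-i}$ in the binary tree of sub-bands of $A_i$ ($L=$ closer to $x$, $R=$ farther), each $\sigma$ picking out the level-$(j-1)$ band whose central gap is the given level-$j$ gap. I define the \emph{depth} $n=n(\sigma)\in\{1,\ldots,j-i\}$ to be the position of the first $R$ in $\sigma$, with the convention $n=j-i$ when $\sigma$ is the all-$L$ path. A direct count gives $2^{j-1-i-n}$ depth-$n$ gaps at level $j$ for $1\leq n\leq j-1-i$, and one for $n=j-i$. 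A depth-$n$ gap lies inside a level-$(i+n)$ sub-band whose left edge lies past both a band of length $b_{i+n}$ and a gap, so its closer endpoint is at distance at least $d+b_{i+n}$ from $x$; using $b_{i+n}\geq c\,b_i/2^n$ with $c=\prod_{k\geq 1}(1-\eps_k)$, this yields $M_j\geq b_i(m+c/2^n)$.

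To assemble, one uses $g_j/b_i=\eps_j c_{i,j-1}/2^{j-1-i}$ with $c_{i,j-1}=\prod_{k=i+1}^{j-1}(1-\eps_k)\leq 1$ to see that the total contribution from depth-$n$ level-$j$ gaps to the sum in Step~1 is at most $2\eps_j/(m2^n+c)$, where the factor $2$ absorbs the exceptional case $n=j-i$. Multiplying by the $1/2$ from the log-linearisation and bounding $1/(m2^n+c)\leq 1/\bigl(c(1+m2^n)\bigr)$ (valid since $c\leq 1$ gives $c(1+m2^n)\leq m2^n+c$) yields
\[
\log R_i(x)\,\leq\,\f{1}{c}\sum_{j=i+1}^\infty\Bigl(\sum_{n=1}^{j-i}\f{1}{1+m2^n}\Bigr)\eps_j,
\]
and exponentiating gives \eqref{Ri}. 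The main obstacle I foresee is precisely this combinatorial bookkeeping: the count $2^{j-1-i-n}$ of depth-$n$ gaps, the gap length $g_j$ scaling like $\eps_j b_i/2^{j-i}$, and the distance lower bound $b_{i+n}\geq c\,b_i/2^n$ must combine to leave exactly one clean factor $1/(1+m2^n)$ per pair $(n,j)$, with no residual geometric factor that would spoil summability of the outer sum under the hypothesis $\sum_j j\eps_j<\infty$.
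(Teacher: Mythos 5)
Your proof is correct and follows essentially the same route as the paper: linearize $\log R_i(x)$ term by term, then group the gaps of $A_i$ by the depth $n$ of the first step away from $x$ in the nested band structure (your ``first $R$'' index) and use $\dist(x,\cdot)\geq mb_i + b_{i+n}$ together with $b_{i+n}\geq c\,b_i/2^n$; the paper organizes exactly the same partition as $G_n\cup F_n$ and arrives at the identical per-pair bound $\eps_j/\bigl(c(1+m2^n)\bigr)$ before interchanging the order of summation. Your binary-tree bookkeeping, including the count $2^{j-1-i-n}$ and the factor $2$ covering the all-$L$ case $n=j-i$, checks out and reproduces the paper's estimate.
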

\begin{proof}
Let us assume that the point $x$ lies to the left of the band $A_i$.
Then
\begin{equation}
 R_i(x)\leq
 \prod_{j:\,(\al_j,\be_j)\subset A_i}\sqrt{1+\f{\be_j-\al_j}{\al_j-x}}
 \leq\exp\biggl\{\frac{1}{2}\sum_{j:\,(\al_j,\be_j)\subset A_i}\f{\be_j-\al_j}{\al_j-x}\biggr\}.
\end{equation}
With the figure below in mind, the idea is for every $n\geq 1$ to estimate the term from the gap $G_n$ and the terms from all the gaps in $F_n$.
\begin{center}
\begin{tikzpicture}
\draw [very thick] (-5,0) node[align=center, below=1pt] {$\be_0$} -- (-4.5,0);
\draw [dashed, very thick] (-4.4,0) -- (-4.1,0);
\draw [very thick] (4.5,0) -- (5,0) node[align=center, below=2pt] {$\al_0$};
\draw [dashed, very thick] (4.1,0) -- (4.4,0);
\draw [very thick] (-4,0) -- (4,0);
\draw [very thick] (-5,-0.1) -- (-5,0.1);
\draw [very thick] (5,-0.1) -- (5,0.1);
\draw [very thick](-3.7,0) circle [radius=0.05] node[align=center, above] {$x$};
\draw [thick] (-3,-0.1) -- (-3,0.1);
\draw [thick] (3,-0.1) -- (3,0.1);
\draw [gray,decorate,decoration={brace,amplitude=10pt},yshift=21pt] (-3,0)  -- (3,0)
   node [black,midway,above=5pt,xshift=1.5pt] {$A_i$};
\draw [gray,decorate,decoration={brace,amplitude=6pt,mirror},yshift=-5pt] (0.5,0)  -- (3,0)
   node [black,midway,below=4pt,xshift=-0pt] {\scriptsize $F_1$};
\draw [gray,decorate,decoration={brace,amplitude=4pt},yshift=5pt] (-0.5,0)  -- (0.5,0)
   node [black,midway,above=1.5pt,xshift=1pt] {\scriptsize $G_1$};
\draw [thin] (-0.5,-0.1) -- (-0.5,0.1);
\draw [thin] (0.5,-0.1) -- (0.5,0.1);
\draw [gray,decorate,decoration={brace,amplitude=4pt,mirror},yshift=-5pt] (-1.5,0)  -- (-0.5,0)
   node [black,midway,below=2pt,xshift=-0pt] {\scriptsize $F_2$};
\draw [gray,decorate,decoration={brace,amplitude=2pt},yshift=5pt] (-2,0)  -- (-1.5,0)
   node [black,midway,above=-0.5pt,xshift=2pt] {\scriptsize $G_2$};
\draw [thin] (-2,-0.1) -- (-2,0.1);
\draw [thin] (-1.5,-0.1) -- (-1.5,0.1);
\draw [gray,decorate,decoration={brace,amplitude=2pt,mirror},yshift=-5pt] (-2.35,0)  -- (-2,0)
   node [black,midway,below=0.5pt,xshift=-7pt] {\scriptsize $\cdots$ $F_3$};
\draw [gray,decorate,decoration={brace,amplitude=1pt},yshift=5pt] (-2.65,0)  -- (-2.35,0)
   node [black,midway,above=-1pt,xshift=-3pt] {\scriptsize $\cdot\hspace{-1pt}\cdot G_3$};
\draw [thin] (-2.65,-0.1) -- (-2.65,0.1);
\draw [thin] (-2.35,-0.1) -- (-2.35,0.1);
\end{tikzpicture}
\end{center}
If $\dist(x, A_i)\geq mb_i$ and $G_n=(\al_n,\be_n)$, we have
\begin{equation}
 \f{\be_n-\al_n}{\al_n-x}\leq\f{g_{i+n}}{b_{i+n}+mb_i}\leq\f{2\eps_{i+n}}{c+m2^n}
\end{equation}
and
\begin{align}
 \notag
 \sum_{j:\,(\al_j,\be_j)\subset F_n}\f{\be_j-\al_j}{\al_j-x}
 &\leq\frac{1}{b_{i+n}+mb_i}\sum_{j>n} 2^{j-n-1} g_{i+j} \\
 &\leq\frac{1}{c(1+m2^n)}\sum_{j>n}\eps_{i+j}.
\end{align}
It hence follows that
\begin{equation}
R_i(x)\leq\exp\biggl\{\frac{1}{c}\sum_{n=1}^\infty\Bigl(\f{1}{1+m2^n}\sum_{j\geq n}\eps_{i+j}\Bigr)\biggr\}
\end{equation}
and \eqref{Ri} is obtained by interchanging the order of summation.
\end{proof}

\begin{lemma} \lb{lem1b}
Suppose $\sum_{j=1}^\infty j\eps_j<\infty$ and let $A$ denote the set given by
\begin{equation}
A=\bigcup_{i=1}^{k-1} (\E_i\cap B_{i-1})\bsn B_i.
\end{equation}
When $x\in(\al_{j_k},\be_{j_k})$, we have
\begin{equation}  \lb{A}
 \prod_{j:\,(\al_j,\be_j)\subset A} \f{|x-\ga_j|}{\sqrt{|x-\al_j| |x-\be_j|}}
 \leq\exp\biggl\{\f{2}{c}\sum_{j=2}^\infty(j-1)\eps_j\biggr\},
\end{equation}
where $c=\prod_{j=1}^\infty(1-\eps_j)$.
\end{lemma}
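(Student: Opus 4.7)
My plan is to decompose $A$ according to the sibling bands of the path $B_0\supset B_1\supset\cdots\supset B_{k-1}$ and apply Lemma~\ref{lem1a} to each piece separately. For each $i\in\{1,\ldots,k-1\}$, the set $(\E_i\cap B_{i-1})\bsn B_i$ is a single level-$i$ band, namely the sibling $B_i^*$ of $B_i$ inside $B_{i-1}$, so that $A=\bigsqcup_{i=1}^{k-1}B_i^*$ is a disjoint union and the product in \eqref{A} factorizes as
$$\prod_{j:\,(\al_j,\be_j)\subset A}\f{|x-\ga_j|}{\sqrt{|x-\al_j||x-\be_j|}} =\prod_{i=1}^{k-1}\prod_{j:\,(\al_j,\be_j)\subset B_i^*}\f{|x-\ga_j|}{\sqrt{|x-\al_j||x-\be_j|}}.$$

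The key geometric observation is that for every $i$, the point $x\in B_{k-1}\subset B_i$ is separated from $B_i^*$ by the level-$i$ gap of length $g_i$, so $\dist(x,B_i^*)\geq g_i=m_i b_i$ with $m_i:=g_i/b_i=2\eps_i/(1-\eps_i)$. Applying Lemma~\ref{lem1a} with $A_i=B_i^*$ and $m=m_i$ yields
$$\prod_{j:\,(\al_j,\be_j)\subset B_i^*}\f{|x-\ga_j|}{\sqrt{|x-\al_j||x-\be_j|}} \leq\exp\biggl\{\f{1}{c}\sum_{j=i+1}^\infty\Bigl(\sum_{n=1}^{j-i}\f{1}{1+m_i 2^n}\Bigr)\eps_j\biggr\}.$$
Multiplying these $k-1$ bounds and interchanging the order of summation reduces the proof to the combinatorial estimate
$$\sum_{i=1}^{\min(j-1,\,k-1)}\sum_{n=1}^{j-i}\f{1}{1+m_i 2^n}\leq 2(j-1)\quad\text{for every }j\geq 2,$$
which then supplies exactly $\f{2}{c}\sum_{j\geq 2}(j-1)\eps_j$ for the combined exponent and hence proves \eqref{A}.

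The main obstacle will be establishing this combinatorial inequality. The naive bound $\f{1}{1+m_i 2^n}\leq 1$ only yields $\sum_i(j-i)=j(j-1)/2$, off by a full factor of $j$. To recover the missing factor one has to exploit the geometric decay $\f{1}{1+m_i 2^n}\leq \f{1}{m_i 2^n}$ for $n$ large, together with the structural lower bound $m_i\geq 2\eps_i$. The cleanest route appears to be to split each inner sum at the threshold $n_i^*\approx\log_2(1/m_i)$, bounding $\f{1}{1+m_i 2^n}$ by $1$ for $n\leq n_i^*$ and by $\f{1}{m_i 2^n}$ for $n>n_i^*$, and then to amortize the resulting $O(\log(1/m_i))$-contributions per~$i$ across the $i$-summation by using the rapid geometric decay of $b_i$ in $i$; alternatively, one may bypass Lemma~\ref{lem1a} entirely and instead estimate $\int d\mu_A(t)/|t-x|$ directly, where $d\mu_A$ sums the gap-length distributions within each $B_i^*$, exploiting the self-similar Cantor structure inside each sibling.
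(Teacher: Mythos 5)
Your reduction breaks down at the step you yourself flag as ``the main obstacle'': the combinatorial inequality $\sum_{i}\sum_{n=1}^{j-i}\f{1}{1+m_i2^n}\leq 2(j-1)$ is false under the hypothesis $\sum_j j\eps_j<\infty$, and it cannot be repaired within your decomposition. The issue is that $m_i=g_i/b_i=2\eps_i/(1-\eps_i)$ depends only on $\eps_i$, and since $\eps_i\to0$ you have $m_i\to0$; the inner sum is then of size $\min\{j-i,\,\log_2(1/\eps_i)\}+O(1)$, so the double sum over $i<j$ grows superlinearly in $j$. Concretely, take $\eps_i=2^{-i}$ (which satisfies $\sum_i i\eps_i<\infty$): then $m_i\leq 2^{2-i}$, each term with $n\leq i-2$ is at least $1/2$, and the left-hand side is at least $\tfrac12\sum_{i}\min\{j-i,i-2\}\sim j^2/16$, far exceeding $2(j-1)$. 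Your proposed rescue --- splitting at $n^*\approx\log_2(1/m_i)$ and ``amortizing across $i$ using the geometric decay of $b_i$'' --- cannot work, because $b_i$ never enters $m_i$; the only distance information your decomposition retains about the sibling band $B_i^*$ is the width $g_i$ of the single separating gap, which is genuinely too weak (in the worst configuration $x$ really can sit at distance comparable to $g_i+b_k$ from $B_i^*$). Along your route one would need roughly $\sum_j j^2\eps_j<\infty$, a strictly stronger hypothesis than the lemma's, and in any case the stated constant $\f2c\sum_{j\geq2}(j-1)\eps_j$ would not come out.

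The paper avoids this by refining the whole set $A$ down to level $k-1$ rather than treating each sibling $B_i^*$ as a monolithic band at level $i$. The gaps of $\E$ inside $A$ are split into (i) those lying in the $2^{k-1}-1$ bands $F_1,\dots,F_{2^{k-1}-1}$ of $\E_{k-1}$ other than $B_{k-1}$, and (ii) the $2^{i-1}-1$ ``intermediate'' gaps of level $i$, $2\leq i\leq k-1$, contained in $A$. For (i), ordering the $F_m$ by distance to $x$ gives $\dist(x,F_{2m+1})\geq m b_{k-1}$, so Lemma \ref{lem1a} is applied with $i=k-1$ and $m$ \emph{growing linearly with the count of bands}; it is exactly this linear growth that makes $\sum_{m}\sum_{n}\f{1}{1+m2^n}$ amortize to the factor $j-1$ (the term $j+1-k$ from $m=0$ plus a bounded contribution from each dyadic block $m\in[2^i,2^{i+1})$). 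For (ii), the $m$th closest level-$i$ gap satisfies $\dist(x,G_m)\geq mb_i$, and $\sum_{m=1}^{2^{i-1}-1}1/m\leq i-1$ gives the factor $(i-1)\eps_i$ directly. This global ordering of all level-$(k-1)$ bands, across different siblings simultaneously, is the amortization your sibling-by-sibling decomposition discards; if you want to salvage your scheme you would have to track the true distances $\dist(x,B_i^*)$ (which cannot all be small at once) rather than the uniform lower bound $g_i$, at which point you are essentially reconstructing the paper's argument.
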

\begin{proof}
The set $A$ is the union of $2^{k-1}-1$ bands in $\E_{k-1}$ (namely all bands except for $B_{k-1}$) and $2^{i-1}-1$ gaps at level $i$ for $i=2,\ldots,k-1$.
Let $F_1, F_2, \ldots, F_{2^{k-1}-1}$ be an ordering of the bands in $\E_{k-1}\bsn B_{k-1}$ so that
\begin{equation}
 \dist(x,F_1)\leq\ldots\leq\dist(x,F_{2^{k-1}-1}).
\end{equation}
By construction,
\begin{equation}
 \dist(x,F_{2m+1})\geq mb_{k-1} \; \mbox{ for } \; m=0,1,\ldots,2^{k-2}-1,
\end{equation}
and since
\begin{equation}
 \sum_{n=1}^{j+1-k}\frac{1}{1+m2^n}\leq\f{1}{2^i} \; \mbox{ when } \; m\geq 2^i,
\end{equation}
we have
\begin{equation}
 \sum_{m=0}^{2^{k-2}-1}\Bigl(\sum_{n=1}^{j+1-k}\f{1}{1+m2^n}\Bigr)\leq
 j+1-k+\sum_{i=0}^{k-3} \Bigl(2^i\cdot\frac{1}{2^i}\Bigr)=j-1.
\end{equation}
Here, the term $j+1-k$ comes from $m=0$ and the inner sum is bounded by $1/2^i$ for the $2^i$ terms corresponding to $m=2^i, \ldots, 2^{i+1}-1$.
When $i$ runs from $0$ to $k-3$, we get the entire sum for $m\geq 1$. By Lemma \ref{lem1a}, it follows that
\begin{equation}  \lb{est sum}
 \prod_{j:\,(\al_j,\be_j)\subset\E_{k-1}\bs B_{k-1} } \f{|x-\ga_j|}{\sqrt{|x-\al_j| |x-\be_j|}}
 \leq\exp\biggl\{\frac{2}{c}\sum_{j=k}^\infty (j-1)\eps_j\biggr\}.
\end{equation}

To finish the proof, fix a level $i\in\{2,\ldots,k-1\}$ and order the $2^{i-1}-1$ gaps at this level according to their distance to $x$.
The $m$th gap in this ordering, say $G_m=(\al_m,\be_m)$, then satisfies that
\begin{equation}
 \dist(x,G_m)\geq mb_i.
\end{equation}
Since
\begin{equation}
 \f{|x-\ga_m|}{\sqrt{|x-\al_m||x-\be_m|}}\leq\sqrt{1+\f{g_i}{mb_i}}\leq\sqrt{1+\f{2\eps_i}{cm}}
\end{equation}
and
\begin{equation}
 \sum_{m=1}^{2^{i-1}-1}\f{1}{m}
 \leq 1+\biggl(\frac{1}{2}+\frac{1}{2}\biggr)+\ldots+\biggl(\frac{1}{2^{i-2}}+\ldots+\f{1}{2^{i-2}}\biggr)=i-1,
\end{equation}
it follows that
\begin{equation}  \lb{est i}
 \prod_{m=1}^{2^{i-1}-1}\f{|x-\ga_m|}{\sqrt{|x-\al_m| |x-\be_m|}}
 \leq\exp\biggl\{\frac{1}{c} (i-1)\eps_i\biggr\}.
\end{equation}
The proof of \eqref{A} is now an immediate consequence of \eqref{est sum} and \eqref{est i} for $i=2,\ldots,k-1$ .
\end{proof}

\begin{lemma} \lb{lem2}
Suppose $\sum_{j=1}^\infty \eps_j<\infty$ and consider the finite product
\begin{equation}
Q(x)=\f{1}{\sqrt{|x-\be_0||x-\al_0|}}\prod_{i=1}^{k-1}\f{|x-\ga_{j_i}|}{\sqrt{|x-\al_{j_i}| |x-\be_{j_i}|}}.
\end{equation}
When $x\in(\al_{j_k},\be_{j_k})$, we have
\begin{equation}
Q(x)\leq \sqrt{\frac{2}{\al_0-\be_0}}
\exp\biggl\{\frac{2}{c}\sum_{i=1}^k\eps_i\biggr\} \f{1}{\sqrt{b_k}},
\end{equation}
where $c=\prod_{j=1}^\infty (1-\eps_j)$.
\end{lemma}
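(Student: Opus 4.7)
The plan is to start from the crude bound
\begin{equation*}
Q(x)^2\leq\frac{1}{|x-\be_0||x-\al_0|}\prod_{i=1}^{k-1}\frac{|x-v_i|}{|x-u_i|}
\end{equation*}
(using $|x-\ga_{j_i}|\leq|x-v_i|$, where $u_i$ is the endpoint of $(\al_{j_i},\be_{j_i})$ closer to $x$ and $v_i$ the other) and to exploit the nested structure $B_0\supset B_1\supset\cdots\supset B_{k-1}$. Let $s_i$ denote the endpoint of $B_i$ shared with $B_{i-1}$ and set $y_i=|x-s_i|$. Then $|x-u_i|=b_i-y_i$, $|x-v_i|=(b_i-y_i)+g_i$, and $|x-\be_0||x-\al_0|=y_1(b_0-y_1)$, so each factor in the product equals $1+g_i/(b_i-y_i)$. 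The sequence $\{y_i\}$ obeys either $y_{i+1}=b_i-y_i$ (Case~I: $s_{i+1}=u_i$) or $y_{i+1}=y_i$ (Case~II: $s_{i+1}=s_i$), and the assumption $x\in(\al_{j_k},\be_{j_k})$ forces $y_{k-1}\in[b_k,b_{k-1}-b_k]$ with $b_k=(1-\eps_k)b_{k-1}/2$.

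A straightforward downward induction on $i$ then shows $\min\{y_i,b_{i-1}-y_i\}\geq b_k$ and, as an immediate corollary, $b_i-y_i\geq b_k$ at every level: the base case is the middle-of-gap condition at $i=k-1$; Case~I swaps the two quantities in the minimum; and Case~II preserves them because $b_{i-1}-y_{i+1}\geq b_{i-1}-b_{i+1}\geq 3b_{i-1}/4\geq b_k$. In particular $y_1,b_0-y_1\geq b_k$, giving $y_1(b_0-y_1)\geq b_k(b_0-b_k)\geq b_k(\al_0-\be_0)/2$ since $b_k\leq b_0/2$; this supplies the prefactor $2/((\al_0-\be_0)b_k)$ exactly.

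For the remaining product I apply $1+t\leq e^t$ and aim to bound $\sum_{i=1}^{k-1}g_i/(b_i-y_i)$ by a constant multiple of $(1/c)\sum_{i=1}^k\eps_i$. The straightforward cases are Case~II at step $i+1$ and $i=k-1$, where a sharper lower bound $b_i-y_i\geq b_{i+1}$ (resp.\ $(1-\eps_k)b_{k-1}/2$) combined with $g_i=2\eps_ib_i/(1-\eps_i)$ and the elementary inequality $(1-\eps_i)(1-\eps_{i+1})\geq\prod_j(1-\eps_j)=c$ yields $g_i/(b_i-y_i)\leq 4\eps_i/c$. The main obstacle is Case~I at step $i+1$, where $d_i=y_{i+1}$ is only bounded below by $b_{j+1}$ (with $j+1$ the next Case~I level, or $j=k-1$), producing a geometrically blown up term $g_i/d_i\leq 2^{j+2-i}\eps_i/c$. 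Taming this requires a telescoping argument that groups the Case~I term with the ensuing Case~II stretch $i+1,\dots,j$ (within which all $y_\ell$ equal $y_{i+1}$, and the Case~II bound $d_\ell\geq b_\ell/2$ already holds): using the band-length identities $b_\ell=(1-\eps_\ell)b_{\ell-1}/2$ together with $\prod_{\ell\in S}(1-\eps_\ell)\geq c$ and the summability $\sum\eps_j<\infty$, one absorbs the geometric blowup into the aggregated sum $\sum_{\ell=i}^{j+1}\eps_\ell/c$ per block, and summing over all such blocks completes the estimate. Making this redistribution precise is the combinatorial heart of the proof.
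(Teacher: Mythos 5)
Your first half reproduces the paper's reduction (the crude bound with near/far gap endpoints, the lower bound $\dist(x,\pd B_{k-1})\geq b_k$, and the extraction of the prefactor $\sqrt{2/((\al_0-\be_0)b_k)}$), and your inductive bookkeeping of $y_i$ is fine up to an index slip ($\min\{y_i,b_i-y_i\}\geq b_k$ is what the induction gives and what you need; with $b_{i-1}$ as written the corollary $b_i-y_i\geq b_k$ does not follow). The genuine gap is in the second half. Once you have extracted the worst-case prefactor using only $y_1\geq b_k$, you are committed to proving $\prod_{i=1}^{k-1}\bigl(1+g_i/(b_i-y_i)\bigr)\leq\exp\{(4/c)\sum_{i=1}^k\eps_i\}$, equivalently $\sum_i g_i/(b_i-y_i)\lesssim (1/c)\sum_i\eps_i$, and this inequality is simply false — not merely hard. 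Take the configuration where Case~I occurs at level $2$ and Case~II ever after, so that $u_1$ (the near endpoint of the first gap) is an endpoint of $B_2,\dots,B_{k-1}$ and $|x-u_1|\leq b_k+g_k$. Then the single factor at $i=1$ is at least $g_1/(2b_k)\geq \eps_1 2^{k-2}$, while with rapidly decaying parameters, say $\eps_j=4^{-j}$ (exactly the regime used later in Theorems \ref{CantorLT} and \ref{CantorGLT}), the right-hand side $\exp\{(4/c)\sum\eps_i\}$ stays bounded as $k\to\infty$. So the "absorption" of the term $2^{j+2-i}\eps_i/c$ into $\sum_{\ell=i}^{j+1}\eps_\ell/c$ per block, which you defer as the combinatorial heart, cannot be carried out: $2^{j-i}\eps_i$ is not controlled by $\sum_{\ell=i}^{j+1}\eps_\ell$ when the $\eps_\ell$ decay geometrically, and $j-i$ can be of order $k$.

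The lemma survives in that configuration only because the prefactor then gains: $y_1\approx b_1$, so $y_1(b_0-y_1)\approx b_1^2\gg b_k(\al_0-\be_0)/2$. In other words, the small denominator $|x-u_1|\sim b_k$ must be compensated by the large distances to the other band endpoints, not by summability of $\eps$; the prefactor and the product cannot be estimated sequentially at their separate worst cases. This coupling is exactly what the paper's proof implements: it keeps all denominators in play and re-pairs them by a permutation $\si$, matching the numerator at level $i$ with the endpoint of $B_i$ \emph{farthest} from $x$ (distance $\geq b_i/2$), which yields the per-level factor $1+4\eps_i/c$ (with a separate, harmless estimate at $i=k-1$ producing the $1/\sqrt{b_k}$). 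To repair your argument you would have to introduce an analogous cross-level redistribution of denominators (or, equivalently, let the lower bound on $y_1(b_0-y_1)$ depend on the block structure), at which point you are essentially reconstructing the paper's rearrangement.
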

\begin{proof}
As in Theorem \ref{ReflEstThm}, we pick $\tilde{\ga}_{j_i}\in\{\al_{j_i},\be_{j_i}\}$ so that
\begin{equation}
|x-\tilde{\ga}_{j_i}|=\max\{|x-\al_{j_i}|, |x-\be_{j_i}|\}, \quad i=1,\ldots,k-1.
\end{equation}
The other point in $\{\al_{j_i},\be_{j_i}\}$ will be denoted by $\bar{\ga}_{j_i}$.
Since $|x-\bar{\ga}_{j_{k-1}}|\geq b_k$, it follows directly that
\begin{align} \lb{Q}
\notag
Q(x)&\leq\f{1}{\sqrt{|x-\be_0| |x-\al_0|}}\prod_{i=1}^{k-1}\sqrt{\f{|x-\tilde{\ga}_{j_i}|}{|x-\bar{\ga}_{j_i}|}} \\
&\leq\sqrt{\f{2}{\al_0-\be_0}}\,\prod_{i=1}^{k-1}\sqrt{\f{|x-\tilde{\ga}_{j_i}|}{|x-\bar{\ga}_{j_{i-1}}|}}\,
\f{1}{\sqrt{b_k}},
\end{align}
where $\bar{\ga}_{j_0}\in\{\al_0,\be_0\}$ is chosen so that
\begin{equation}
|x-\bar{\ga}_{j_0}|=\min\{|x-\al_0|, |x-\be_0|\}.
\end{equation}
Note that $\bar{\ga}_{j_0}, \bar{\ga}_{j_1}, \ldots, \bar{\ga}_{j_{k-1}}$ coincide with the endpoints of $B_1, \ldots, B_{k-1}$ (counting the common endpoints only once). The ordering, however, can be arbitrary.

In order to estimate the product over $i$ in \eqref{Q}, we rearrange the factors in the denominator.
Let $\bar{\ga}_{j_{\si(i)}}$ be the endpoint of $B_i$ which is farthest from $x$
(this happens to be the endpoint of $B_i$ which is {\it not} an endpoint of $B_{i+1}$).
Then $x$ is closer to the other endpoint of $B_i$ and we have
\begin{equation} \lb{prod i}
\f{|x-\tilde{\ga}_{j_i}|}{|x-\bar{\ga}_{j_{\si(i)}}|}\leq\f{g_i+b_i/2}{b_i/2}\leq 1+\f{4\eps_i}{c}
\end{equation}
for $i=1,\ldots,k-2$.
Since $\bar{\ga}_{j_{\si(k-1)}}$ is an endpoint of $B_{k-1}$, we also have
\begin{equation} \lb{factor k-1}
\f{|x-\tilde{\ga}_{j_{k-1}}|}{|x-\bar{\ga}_{j_{\si(k-1)}}|}\leq
\f{g_{k-1}+b_k+g_k}{b_k}\leq 1+\f{4\eps_{k-1}}{c}+\f{2\eps_k}{c}.
\end{equation}
Hence,
\begin{equation}
 \prod_{i=1}^{k-1}\f{|x-\tilde{\ga}_{j_i}|}{|x-\bar{\ga}_{j_{i-1}}|}=
 \prod_{i=1}^{k-1}\f{|x-\tilde{\ga}_{j_i}|}{|x-\bar{\ga}_{j_{\si(i)}}|}
 \leq \prod_{i=1}^{k}\Bigl(1+\f{4\eps_i}{c}\Bigr),
\end{equation}
and the result follows from \eqref{Q}.
\end{proof}

As a direct consequence of Theorems~\ref{CantorE}, \ref{ReflEstThm}, \ref{LTthm1}, and \ref{LTthm2}, we get the following Lieb--Thirring bounds.
\begin{theorem} \lb{CantorLT}
Let $\E$ be the middle $\mathbold{\eps}$-Cantor set constructed in \eqref{CantorSetE} and suppose $J$, $J'$ are two-sided Jacobi matrices such that $J'\in\cc T_\E$ and $J=J'+\de J$ is a compact perturbation of $J'$.
If $\eps_k\leq C/4^{k}$ for some $C>0$ and all large $k$, then
\begin{align} \lb{Cantor-LT1}
\sum_{\la\in\si(J)\bs\E} \dist(\la,\E)^p \leq L_{p,\,\E}\sum_{n\in\bb Z}|\de a_n|+|\de b_n|
\end{align}
for $1/2<p<1$.
If $\eps_k\leq C/a^{k}$ for some $a>4$, $C>0$, and all large $k$, then
\begin{align} \lb{Cantor-LT2}
\sum_{\la\in\si(J)\bs\E} \dist(\la,\E)^{p} \leq  L_{p,\,\E}\sum_{n\in\bb Z} |\de a_n|^{p+1/2}+|\de b_n|^{p+1/2}
\end{align}
for all $p>1/2$.
In either case, the constant $L_{p,\,\E}$ is independent of $J$ and $J'$ and only depends on $p$ and $\E$.
\end{theorem}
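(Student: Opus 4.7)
The plan is to combine the four earlier results of the paper directly. Both hypotheses $\eps_k\leq C/4^k$ and $\eps_k\leq C/a^k$ with $a>4$ imply $\sum_k k\eps_k<\infty$, so Theorem~\ref{CantorE} yields the pointwise bound \eqref{gap} for reflectionless Herglotz integrals on $\E$. In addition, $\sum_k\eps_k<\infty$ makes $\E$ homogeneous (cf.\ \cite[p.~125]{PY03}), so by \cite{PR09,PSZ10} every reflectionless measure on $\E$, and in particular every spectral measure $d\rho_n$ of $(J',\de_n)$, is absolutely continuous. I would then verify the hypotheses of Theorems~\ref{LTthm1} and \ref{LTthm2} in the form of Remark~\ref{LTremark}(c), which also ensures uniformity of the constant in $J'\in\cc T_\E$.

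For the outer gap $\bb R\setminus\E_0$, the first line of \eqref{gap} applies directly: on the complement of the convex hull of $\E$ the integrals $\int d\rho_n/|t-x|$ and $|\int d\rho_n/(t-x)|$ coincide, giving \eqref{RnCond1} and \eqref{RnCond2} with a fixed $C_0$. For an inner gap $(\al_{j_k},\be_{j_k})\subset\E_{k-1}\bsn\E_k$ at level $k$, I would combine the second line of \eqref{gap} with Theorem~\ref{ReflEstThm}. Since the band and gap lengths of the Cantor construction satisfy $g_k\asymp\eps_k 2^{1-k}(\al_0-\be_0)$ with constants depending only on $\prod_j(1-\eps_j)>0$, the logarithmic factor in Theorem~\ref{ReflEstThm} is of order $k+\log(1/\eps_k)$, giving
\begin{align}
\sup_{n\in\bb Z}\int_\E\f{d\rho_n(t)}{|t-x|}\leq\f{C_{j_k}}{\dist(x,\E)^{1/2}},\quad x\in(\al_{j_k},\be_{j_k}),
\end{align}
with $C_{j_k}=\wti C\sqrt{\eps_k}\bigl(k+\log(1/\eps_k)\bigr)$ and $\wti C$ depending only on $\E$.

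The remaining step is a bookkeeping check of \eqref{CkCond1} and \eqref{CkCond2}. The Cantor construction has $2^{k-1}$ gaps at level $k$, each of length $\asymp\eps_k 2^{1-k}$, so \eqref{CkCond1} amounts to
\begin{align}
\sum_{k\geq1}2^{k-1}\sqrt{\eps_k}\bigl(k+\log(1/\eps_k)\bigr)\,g_k^{p-1/2}\lesssim\sum_{k\geq1}\bigl(k+\log(1/\eps_k)\bigr)\,\eps_k^p\,2^{k(3/2-p)},
\end{align}
and under $\eps_k\leq C/4^k$ the general term decays like $k\cdot 2^{k(3/2-3p)}$, summable for all $1/2<p<1$; Theorem~\ref{LTthm1} then delivers \eqref{Cantor-LT1}. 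Similarly, \eqref{CkCond2} reads
\begin{align}
\sum_{k\geq1}2^{k-1}\sqrt{\eps_k}\bigl(k+\log(1/\eps_k)\bigr)\lesssim\sum_{k\geq1}k\,(2/\sqrt a)^k,
\end{align}
which converges precisely when $a>4$, and Theorem~\ref{LTthm2} delivers \eqref{Cantor-LT2}. The main difficulty in the whole plan is the exponential factor $2^{k-1}$ counting the gaps at each level: it is this doubling that forces geometric (rather than merely summable) decay of $\{\eps_k\}$ and accounts for the discrepancy between the present result and the infinite band Theorem~\ref{InfBandLT}.
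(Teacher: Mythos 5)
Your argument matches the paper's proof in structure and substance: combine Theorem~\ref{CantorE} with Theorem~\ref{ReflEstThm} to get per-gap constants $C_k$ for the hypotheses of Theorems~\ref{LTthm1}--\ref{LTthm2}, then account for the $2^{k-1}$ gaps of length $g_k\asymp\eps_k 2^{-k}$ at level $k$ to verify \eqref{CkCond1} and \eqref{CkCond2}. Your bookkeeping is carried out slightly more explicitly (and your $k+\log(1/\eps_k)$ is a marginally tighter form of the paper's $k\log(1/\eps_k)$), but this does not alter the argument or the resulting thresholds on $\{\eps_k\}$.
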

\begin{proof}
By construction of $\E$, we have
\begin{align}
g_k \geq 2^{1-k}\eps_k (\al_0-\be_0)\prod_{j=1}^\infty (1-\eps_j).
\end{align}
So \eqref{gap} combined with \eqref{ReflEst1} yields \eqref{RnCond1} and \eqref{RnCond2} for all the gaps at level $k\geq1$
with a constant
\begin{equation}
C_k=Ck\sqrt{\eps_k}\log(1/\eps_k).
\end{equation}
Here, $C>0$ is sufficiently large and independent of $k$.
Since there are $2^{k-1}$ gaps at level $k$, each of length $g_k\leq2^{1-k}\eps_k (\al_0-\be_0)$, the exponential decay assumptions on $\eps_k$ yield \eqref{CkCond1} and \eqref{CkCond2}. The result now follows from
Theorems~\ref{LTthm1} and \ref{LTthm2}.
\end{proof}

As before, let $g$ denote the potential theoretic Green function for the domain $\overline{\bb C}\bsn\E$ with logarithmic pole at infinity.
The counterpart of Theorem \ref{InfBandGLT} for middle $\mathbold{\eps}$-Cantor sets reads

\begin{theorem} \lb{CantorGLT}
Let $\E$ be the middle $\mathbold{\eps}$-Cantor set constructed in \eqref{CantorSetE} and suppose
$J$, $J'$ are two-sided Jacobi matrices such that $J'\in\cc T_\E$ and $J=J'+\de J$ is a compact perturbation of $J'$.
If $\eps_k\leq C/2^{k}$ for some $C>0$ and all large $k$, then for every $p>1$,
\begin{align} \lb{Cantor-LT3}
\sum_{\la\in\si(J)\bs\E} g(\la)^{p} \leq  L_{p,\,\E}\sum_{n\in\bb Z} |\de a_n|^{(p+1)/2}+|\de b_n|^{(p+1)/2},
\end{align}
where the constant $L_{p,\,\E}$ is independent of $J$, $J'$ and only depends on $p$ and $\E$.
\end{theorem}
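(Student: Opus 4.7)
The plan is to follow the route of Theorem~\ref{InfBandGLT}, replacing the infinite band gap bound \eqref{InfBandEst} by the Cantor analog \eqref{gap} of Theorem~\ref{CantorE}. The hypothesis $\eps_k\leq C/2^k$ trivially implies $\sum_{k}k\eps_k<\infty$, so $\E$ is homogeneous and all reflectionless measures on $\E$ are absolutely continuous; in particular, Theorem~\ref{CantorE} applies to the equilibrium measure $d\mu_\E$.

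First, express the Green function via $d\mu_\E$ as in \eqref{g-dmuE} and differentiate: by \eqref{dg-dmuE} together with the Cantor gap estimate \eqref{gap},
\[
|\pd g(x)|\leq \f{C\sqrt{\eps_k}}{\dist(x,\E)^{1/2}}, \quad x\in\E_{k-1}\bsn\E_k,\ k\geq1,
\]
with a uniform constant bound in the outer gap. Since $g$ vanishes on $\E$, integrating along the real line from the nearer edge of each gap yields
\[
g(x)\leq C\sqrt{\eps_k}\,\dist(x,\E)^{1/2}, \quad x\in\E_{k-1}\bsn\E_k.
\]

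Second, I run the Birman--Schwinger argument from the proof of Theorem~\ref{LTthm2} on a \emph{single} gap at level $k$, with Schatten parameter $(p+1)/2$ (so that the resulting LHS exponent $(p+1)/2-1/2=p/2$ is exactly what the Green function estimate above needs). By \eqref{gap}, Theorem~\ref{ReflEstThm}, and Remark~\ref{LTremark}(c), the relevant per-gap constant at level $k$ is $C_k=Ck\sqrt{\eps_k}\log(1/\eps_k)$ (the factor $k$ arising from the logarithmic correction in \eqref{ReflEst1}, since the distance of a level-$k$ gap to the endpoints of $\E_0$ is of order $1$ while its own length is of order $\eps_k/2^k$). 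This gives, for each individual gap at level $k$,
\[
\sum_{\la\in\si(J)\cap(\al_{j_k},\be_{j_k})}\dist(\la,\E)^{p/2} \leq Ck\sqrt{\eps_k}\log(1/\eps_k)\sum_{n\in\bb Z}|\de a_n|^{(p+1)/2}+|\de b_n|^{(p+1)/2}.
\]

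Third, substitute the Green function bound into this per-gap estimate to obtain
\[
\sum_{\la\in\si(J)\cap(\al_{j_k},\be_{j_k})} g(\la)^p \leq C'k\,\eps_k^{(p+1)/2}\log(1/\eps_k)\sum_{n\in\bb Z}|\de a_n|^{(p+1)/2}+|\de b_n|^{(p+1)/2}.
\]
There are $2^{k-1}$ gaps at level $k$, so summing and adding the (simpler) outer gap contribution reduces the result to checking convergence of $\sum_{k\geq1} 2^{k} k\eps_k^{(p+1)/2}\log(1/\eps_k)$. Under $\eps_k\leq C/2^k$ this is dominated by $\sum_k k\log(1/\eps_k)\cdot 2^{-k(p-1)/2}$, which converges for every $p>1$ (any $\delta\in(0,(p-1)/2)$ absorbs the logarithmic factor by writing $\eps_k^{(p+1)/2}\log(1/\eps_k)\leq M_\delta\,\eps_k^{(p+1)/2-\delta}$).

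The main obstacle is balancing the doubling number of gaps at each level against the smallness factor $\eps_k^{(p+1)/2}$; this is precisely what forces the stronger assumption $\eps_k\leq C/2^k$ compared to the infinite band case, since the series $\sum 2^k \eps_k^{(p+1)/2}$ needs to converge for every $p>1$. Apart from this combinatorial balancing, every step is a direct transcription of the infinite band argument.
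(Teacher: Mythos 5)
Your proposal is correct and follows essentially the same route as the paper's proof: differentiate the potential-theoretic representation of $g$ via the reflectionless equilibrium measure, integrate the gap estimate \eqref{gap} to get $g(x)\lesssim\sqrt{\eps_k}\,\dist(x,\E)^{1/2}$ in level-$k$ gaps, run the Birman--Schwinger machinery from Theorem~\ref{LTthm2} per gap with constant $C_k=Ck\sqrt{\eps_k}\log(1/\eps_k)$, substitute the Green function bound, and sum over the $2^{k-1}$ gaps at each level. The only (minor) difference is that you spell out the convergence of $\sum_k 2^k k\,\eps_k^{(p+1)/2}\log(1/\eps_k)$ more explicitly via the $\delta$-absorption trick, whereas the paper leaves it to the reader once $p>1$ and $\eps_k\lesssim 2^{-k}$ are in hand.
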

\begin{proof}
As in the proof of Theorem~\ref{InfBandGLT} we use \eqref{dg-dmuE} and the fact that the equilibrium measure for $\E$ is reflectionless. Hence, \eqref{gap} combined with
integration over the gaps yields the estimate
\begin{align}
g(x) \leq C\sqrt{\eps_k}\,\dist(x,\E)^{1/2}, \quad x\in\E_{k-1}\bsn\E_k, \quad k\geq0,
\end{align}
where $\E_{-1}=\bb R$ and $\eps_0=1/e$.
Recall now that $\E$ has $2^{k-1}$ gaps at level $k\geq1$, that is, $\E_{k-1}\bsn\E_k$ consists of $2^{k-1}$ identical intervals.
So as in the proofs of Theorems \ref{LTthm2} and \ref{CantorLT}, we obtain
\begin{align}
\sum_{\la\in\si(J)\cap(\E_{k-1}\bs\E_k)} \dist(\la,\E)^{p/2} \leq  2^{k-1}C_k\sum_{n\in\bb Z} |\de a_n|^{(p+1)/2}+|\de b_n|^{(p+1)/2}, 
\end{align}
where $C_k=Ck\sqrt{\eps_k}\log(1/\eps_k)$. Thus, for each $k\geq0$,
\begin{align}
\sum_{\la\in\si(J)\cap(\E_{k-1}\bs\E_k)} g(\la)^p \leq  C2^kk\eps_k^{(p+1)/2}\log(1/\eps_k)\sum_{n\in\bb Z} |\de a_n|^{(p+1)/2}+|\de b_n|^{(p+1)/2}.
\end{align}
Since $p>1$ and $\eps_k$ decays no slower than $C/2^k$, summing over $k$ yields \eqref{Cantor-LT3}.
\end{proof}



\end{document}